\date{\today}
\newtheorem{dummy}{anything}[section]
\newtheorem{theorem}[dummy]{Theorem}
\newtheorem*{thma}{Theorem A}
\newtheorem*{thmb}{Theorem B}
\newtheorem{lemma}[dummy]{Lemma}
\newtheorem{proposition}[dummy]{Proposition}
\newtheorem{corollary}[dummy]{Corollary}
\theoremstyle{definition}
  \newtheorem{example}[dummy]{Example}
  \newtheorem*{remark}{Remark}
\newcommand{\cA}{\mathcal A}
\newcommand{\cB}{\mathcal B}
\newcommand{\cD}{\mathcal D}
\newcommand{\cG}{\mathcal G}
\newcommand{\bZ}{\mathbb Z}
\newcommand{\bC}{\mathbb C}
\newcommand{\bR}{\mathbb R}
\newcommand{\vv}{\, | \,}
\providecommand{\abs}[1]{\lvert #1\rvert}
\DeclareMathOperator{\sign}{sign}
\DeclareMathOperator{\Ad}{Ad}
\DeclareMathOperator{\ch}{ch}
\DeclareMathOperator{\Tr}{Tr}
\DeclareMathOperator{\diag}{diag}
\newcommand{\gr}{\operatorname{gr}}
\begin{document}

\title{Equivariant rho-invariants and instanton \\ homology of torus knots}

\author{Nima Anvari}
\address{Department of Mathematics 
 \newline\indent
University of Miami
 \newline\indent
Coral Gables, FL}
\email{anvarin{@}math.miami.edu}

\begin{abstract}
The equivariant rho-invariants studied in this paper are a version of 
the classical rho-invariants of Atiyah, Patodi, and Singer in the 
presence of an isometric involution. We compute these rho-invariants 
for all involutions on the 3-dimensional lens spaces with $1$-dimensional 
fixed point sets, as well as for some involutions on Brieskorn homology 
spheres. As an application, we compute the generators and Floer gradings 
in the singular instanton chain complex of $(p,q)$-torus knots with odd 
$p$ and $q$.
\end{abstract}
\maketitle

\section{Introduction}
Let $Y$ be a closed oriented Riemannian $3$-manifold and $\alpha: \pi_1(Y) \rightarrow U(n)$ a unitary representation. Associated with this data are the classical $\eta$- and $\rho$-invariants of Atiyah, Patodi and Singer \cite{APS2} defined as follows. Consider the universal cover $\widetilde Y \to Y$ and the flat vector bundle $E_{\alpha} = \widetilde Y \times_{\pi_1 (Y)}\,\mathbb C^n$. The twisted odd signature operator $B^{ev}_{\alpha}: \Omega^{ev}(Y; E_{\alpha}) \to \Omega^{ev}(Y; E_{\alpha})$ given by
\[
B^{ev}_{\alpha}(\phi)=(-1)^p(\ast d_{\alpha}-d_{\alpha}\ast)(\phi),\quad \deg(\phi) = 2p,
\]
is self-adjoint and elliptic. As such it has discrete spectrum without accumulation points and with eigenvalues of finite multiplicity. Let $\lambda$ be the eigenvalues of the operator $B^{ev}_{\alpha}$ and let $W_{\lambda,\alpha}$ be the corresponding eigenspaces. The function 
\[
\eta_{\alpha} (Y)(s) = \sum_{\lambda \neq 0}\; \frac{\sign \lambda\cdot \dim W_{\lambda,\alpha}}{|\lambda|^s}
\]
admits a meromorphic continuation to the complex plane with no pole at the origin; the value of this function at $s = 0$ is the $\eta$-invariant $\eta_{\alpha} (Y)$. The $\rho$-invariant of $(Y,\alpha)$ is then defined as 
\[
\rho_{\alpha}(Y) = \eta_{\alpha} (Y) - \eta_{\theta} (Y),
\]
where $\theta: \pi_1 (Y) \rightarrow U(n)$ is the trivial representation. It is independent of the metric and defines a diffeomorphism invariant of $(Y,\alpha)$.

This paper deals with equivariant analogues of the $\eta$- and $\rho$-invariants in presence of an orientation preserving isometric involution $\tau: Y\rightarrow Y$ as defined in \cite{APS2} and \cite{Don78}. Suppose that a representation $\alpha: \pi_1(Y) \rightarrow U(n)$ is such that its pull-back $\tau^{\ast}\alpha$ is conjugate to $\alpha$. Then the pull-back of the flat vector bundle $E_{\alpha}$ is isomorphic to $E_{\alpha}$ hence $\tau$ can be lifted to a flat bundle automorphism $\nu: E_{\alpha} \rightarrow E_{\alpha}$. Note that the lift $\nu$ need not be an involution and that there may be more than one lift. The lift $\nu$ acts on $\Omega^{ev}(Y; E_{\alpha})$ via pull-back of forms making the diagram
\smallskip
\[
\begin{CD}
\Omega^{ev}(Y;E_{\alpha}) @> B^{ev}_{\alpha} >>  \Omega^{ev}(Y;E_{\alpha}) \\
@V \nu^{\ast} VV @VV \nu^{\ast}   V  \\
\Omega^{ev}(Y;E_{\alpha}) @> B^{ev}_{\alpha} >> \Omega^{ev}(Y;E_{\alpha}).
\end{CD}
\]

\smallskip\noindent
commute. Each eigenspace $W_{\lambda,\alpha}$ of the operator $B^{ev}_{\alpha}$ is then $\nu^{\ast}$-invariant, and the function
\[
\eta_{\alpha}(\nu,Y)(s)=\sum_{\lambda \neq 0}\; \dfrac{\sign \lambda\cdot \Tr(\nu^{\ast} \vv W_{\lambda,\alpha})}{\abs{\lambda}^s}
\]

\smallskip\noindent
admits a meromorphic continuation to the complex plane with no pole at the origin. The value of this function at $s = 0$ is the equivariant $\eta$-invariant of the triple $(Y,\alpha,\nu)$ which we denote by $\eta_{\alpha}(\nu,Y)$. 

Let $\theta: \pi_1 (Y) \to U(n)$ be the trivial representation and assume that the flat vector bundle $E_{\alpha}$ is abstractly trivial. Define the equivariant $\rho$-invariant by the formula
\[
\rho_{\alpha}(\nu,Y)=\eta_{\alpha}(\nu,Y)-\eta_{\theta}(\nu,Y),
\]
where the same lift $\nu$ is chosen for both $\alpha$ and $\theta$. For a fixed lift $\nu$, the equivariant $\rho$-invariant turns out to be independent of the metric and is invariant under diffeomorphims of $Y$ that commute with $\tau$.

In the case of a free involution, the equivariant $\rho$-invariant can often be computed using the Fourier transform techniques of \cite{APS2}. However, the condition on the involution to be free is too restrictive: for example, non-free involutions are intrinsic to the calculation of Floer indices in the instanton knot homology by Poudel and Saveliev \cite{poudel2015link}. Calculations for non-free involutions are rather sparse in the literature; examples of such calculations can be found in Degeratu \cite{degeratu2009eta} and Saveliev \cite{saveliev1999floer}. We add to this body of knowledge an explicit calculation of the equivariant $\rho$-invariants for smooth involutions on $3$-dimensional lens spaces $L(p,q)$ with one-dimensional fixed point sets, as classified by Hodgson and Rubinstein \cite{hodgson1985involutions}.

Let us view the lens space $L(p,q)$ as the quotient of the unit sphere $S^3 \subset \mathbb C^2$ by the action of the cyclic group of order $p$ whose generator sends $(z_1,z_2)$ to $(\zeta z_1, \zeta^q z_2)$ for $\zeta = e^{2\pi i/p}$. Consider the involution $\tau: L(p,q) \to L(p,q)$ defined on the orbits of this action by the formula $\tau ([z_1,z_2]) = [-z_1, z_2]$; this is one of the involutions on the Hodgson--Rubinstein list. It has a circle as its fixed point set. With a base point on this circle, the induced action $\tau_{\ast}$ on the fundamental group $\pi_1 (L(p,q))$ is trivial, therefore, any representation $\alpha: \pi_1(L(p,q)) \rightarrow U(1)$ satisfies $\tau^{\ast}\alpha = \alpha$. The formula $\nu ([z_1,z_2, \xi]) = [-z_1,z_2, \xi]$ then defines a lift $\nu: E_{\alpha} \to E_{\alpha}$ which will be called the \emph{canonical lift} and denoted by $\tau$. Despite the fact that the bundle $E_{\alpha}$ need not be trivial, the equivariant $\rho$-invariant is well-defined using the canonical lift on both bundles. The same can be said about the involution on $L(p,q)$ defined by $\tau ([z_1,z_2]) = [z_1, -z_2]$.

\begin{thma}
Let $\alpha:\pi_1(L(p,q))\rightarrow U(1)$ be a representation sending the canonical generator to $e^{2\pi i \ell/p}$, with $p$ odd. If the involution $\tau: L(p,q) \to L(p,q)$ is given by $\tau([z_1,z_2]) = [-z_1, z_2]$ then
\smallskip
\[
\rho_{\alpha}(\tau,L(p,q))=-\dfrac{2}{p}\;\sum_{k=1}^{p-1}\; \tan \left(\dfrac{\pi k}{p}\right)\cot\left(\dfrac{\pi k q}{p}\right)\sin^2\left(\dfrac{\pi k \ell}{p}\right).
\]

\medskip\noindent
If the involution $\tau: L(p,q) \to L(p,q)$ is given by the formula $\tau ([z_1,z_2]) = [z_1,-z_2]$ then 
\smallskip
\[
\rho_{\alpha}(\tau,L(p,q))=-\dfrac{2}{p}\;\sum_{k=1}^{p-1}\; \cot\left(\dfrac{\pi k}{p}\right)\tan\left(\dfrac{\pi q k}{p}\right)\sin^2\left(\dfrac{\pi k \ell}{p}\right).
\]
If $p \geq 2$ is even, the formulas above are valid with the modification that the singular terms be left out of the summation.
\end{thma}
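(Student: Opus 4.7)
The plan is to lift the computation to the universal cover $S^3$ via the Fourier transform technique of \cite{APS2}. Since $\tilde{\tau}(z_1,z_2)=(-z_1,z_2)$ commutes with the free $\mathbb{Z}_p$-action on $S^3$, each $\alpha$-eigenspace of $B^{ev}_{\alpha}$ on $L(p,q)$ is identified with the $\alpha$-isotypic part of the corresponding eigenspace of the untwisted operator on $S^3$. Tracing $\nu^{\ast}$ through the character projection and then subtracting the analogous expression for the trivial representation yields
\[
\rho_{\alpha}(\tau,L(p,q))=\frac{1}{p}\sum_{k=0}^{p-1}\bigl(e^{-2\pi i k\ell/p}-1\bigr)\,\eta(g^{k}\tilde{\tau},S^3),
\]
where $\eta(g^{k}\tilde{\tau},S^3)$ is the ordinary equivariant $\eta$-invariant of $S^3$ for the isometry $g^{k}\tilde{\tau}$. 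The $k=0$ coefficient vanishes automatically, which is what allows the argument to sidestep the fact that $\tilde{\tau}$ itself has a fixed circle in $S^3$.

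For $p$ odd and $1\le k\le p-1$, the isometry $g^{k}\tilde{\tau}$ rotates the two factors of $\mathbb{C}^2$ by the angles $\theta_1=\pi+2\pi k/p$ and $\theta_2=2\pi kq/p$, neither a multiple of $2\pi$, so $g^{k}\tilde{\tau}$ acts freely on $S^3$. Extending it linearly to $D^4$ with the origin as its unique fixed point and applying the Atiyah--Bott fixed-point contribution in the $G$-signature theorem gives $\eta(g^{k}\tilde{\tau},S^3)=\tan(\pi k/p)\cot(\pi kq/p)$, after simplifying with $\cot(\pi/2+\pi k/p)=-\tan(\pi k/p)$. Pairing the indices $k$ and $p-k$ shows that the trigonometric factor is invariant under $k\mapsto p-k$, while
\[
\bigl(e^{-2\pi ik\ell/p}-1\bigr)+\bigl(e^{2\pi ik\ell/p}-1\bigr)=-4\sin^{2}(\pi k\ell/p)
\]
eliminates the imaginary parts and collapses the sum into the stated formula. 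The second involution $[z_1,z_2]\mapsto[z_1,-z_2]$ is handled identically, interchanging the roles of the two coordinates, which exchanges tangent and cotangent in the angular computation.

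The main technical obstacle arises for $p$ even at $k=p/2$, where the first rotation angle is a multiple of $2\pi$ and $g^{p/2}\tilde{\tau}$ is itself an involution of $S^3$ with a fixed circle; the isolated-fixed-point Atiyah--Bott formula no longer applies there. I would handle this case by extending $g^{p/2}\tilde{\tau}$ linearly to $D^4$, where its fixed set is a properly embedded $2$-disk of self-intersection zero meeting the boundary transversely in the fixed circle. Applying the version of the $G$-signature theorem for codimension-$2$ fixed submanifolds (so that the local contribution is controlled by the self-intersection of the fixed disk) produces $\eta(g^{p/2}\tilde{\tau},S^3)=0$, and hence the $k=p/2$ term disappears from the sum. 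This justifies leaving out the singular terms as the statement asserts and completes the proof for even $p$.
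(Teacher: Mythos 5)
Your proposal is correct and follows essentially the same route as the paper: the Fourier/character decomposition reducing $\rho_{\alpha}(\tau,L(p,q))$ to the equivariant $\eta$-invariants $\eta(g^{k}\widetilde{\tau},S^3)$, the evaluation of each such term as the isolated-fixed-point signature defect $\tan(\pi k/p)\cot(\pi kq/p)$, the $k\leftrightarrow p-k$ symmetrization producing the $\sin^2$ factor, and, for even $p$, the vanishing of the $k=p/2$ term because the fixed set becomes a $2$-disk of zero self-intersection (the paper makes this last point precise by closing the disk with a $0$-framed equivariant $2$-handle). The only cosmetic difference is your conjugate character convention $e^{-2\pi ik\ell/p}$ versus the paper's $e^{2\pi ik\ell/p}$, which is immaterial after symmetrization.
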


\smallskip

It follows from Hodgson and Rubinstein \cite{hodgson1985involutions} that every smooth involution on a lens space $L(p,q)$ which has one-dimensional fixed point set and satisfies $\tau^{\ast}\alpha = \alpha$ for all representations $\alpha: \pi_1(L(p,q)) \to U(1)$ is strongly equivalent to one of the involutions in the above theorem. The only other involution in their classification for which $\tau^{\ast}\alpha$ and $\alpha$ are conjugate, this time as $SU(2)$ representations, is induced by the complex conjugation on $S^3$. In this case, it was shown by Saveliev \cite{saveliev1999floer} that the equivariant $\rho$-invariants vanish for all $L(p,q)$. We will give an independent proof of this fact in Section \ref{S:complex}. In the special case of $p=2$, every orientation preserving involution with one-dimensional fixed point set is strongly equivalent to the complex conjugation involution on $\mathbb{R}P^3$ with the fixed point set projecting to the Hopf link, and the equivariant $\rho$-invariants for such an involution all vanish.  

The trigonometric sums that show up in Theorem A are further discussed in Section \ref{S:sums} where we interpret them as a signed count of certain lattice points in the plane, similar to that of Casson and Gordon \cite{casson1986cobordism}.

This paper was inspired by the formula of Poudel and Saveliev \cite{poudel2015link} for computing Floer gradings in the singular instanton knot homology $I^{\natural}(K)$ of Kronheimer and Mrowka \cite{kronheimer2011khovanov}. In Section \ref{S:floer} we use their formula to calculate the Floer gradings in $I^{\natural}(K)$ for all $(p,q)$-torus knots $K$ with odd $p$ and $q$. The main ingredient in that calculation is an extension of Theorem A to a class of Brieskorn homology spheres as we now explain.

Let $p$ and $q$ be positive relatively prime odd integers. The Brieskorn homology sphere $\Sigma(2,p,q)$ is the link of the complex surface singularity $\{(x,y,z)\in \bC^3 \vv x^2+y^p+z^q=0 \}\,\cap\,S^5$ in $\bC^3$. It has a natural orientation preserving involution $\tau (x,y,z) = (-x,y,z)$ contained in the circle action that gives $\Sigma(2,p,q)$ the structure of a Seifert fibered manifold. This involution turns $\Sigma(2,p,q)$ into the double branched cover of $S^3$ with branching set the right-handed $(p,q)$-torus knot. Since $\Sigma(2,p,q)$ is an integral homology sphere, all non-trivial representations $\alpha: \pi_1 (\Sigma(2,p,q)) \to SU(2)$ are irreducible. For every irreducible representation $\alpha$, its pull back $\tau^*\alpha$ is conjugate to $\alpha$, and there is an essentially unique lift $\nu: E_{\alpha} \to E_{\alpha}$ that does the job. We will derive a formula for the equivariant $\rho$-invariant of the adjoint representation $\Ad\alpha: \Sigma (2,p,q) \to SO(3)$ with respect to the adjoint lift, which we again call $\nu$. To conform to our general setup, we will view $SO(3)$ as a natural subgroup of $SU(3)$.

To state our result, choose a set of Seifert invariants $(0;(2,b_1),(p,b_2),q,b_3))$ for $\Sigma(2,p,q)$, where $b_2$ and $b_3$ can be any even integers such that $pqb_1 + 2q b_2 + 2p b_3 = 1$. In addition, recall from Fintushel--Stern \cite{FS90} that an irreducible representation $\alpha: \pi_1 (\Sigma(2,p,q)) \to SU(2)$ is uniquely determined by its rotation numbers $(1,\ell_2,\ell_3)$, where $\ell_2$ and $\ell_3$ are even integers satisfying $0<\ell_2< p$ and $0<\ell_3<q$ and certain additional constraints.

\begin{thmb}
Let $p$, $q$ be relatively prime odd integers and $\alpha: \pi_1 (\Sigma(2,p,q)) \to SU(2)$ an irreducible $SU(2)$ representation with rotation numbers $(1,\ell_2,\ell_3)$. Then the equivariant $\rho$-invariant of $\Sigma(2,p,q)$ with respect to the aforementioned involution $\tau$ and the lift $\nu$ is given by 
\begin{align*}
\rho_{\Ad\alpha}(\nu,\Sigma(2,p,q))=1-\dfrac{2}{pq} &-\dfrac{4}{p}\;\sum_{k=1}^{p-1}\cot\left(\dfrac{\pi k}{p} \right)\tan\left( \dfrac{\pi b_2 k}{p}\right) \cos^{2}\left(\dfrac{\pi k \ell_2}{p}\right) \\&-\dfrac{4}{q}\;\sum_{k=1}^{q-1}\cot\left(\dfrac{\pi k}{q}\right)\tan\left(\dfrac{\pi b_3 k}{q}\right)\cos^{2}\left(\dfrac{\pi k \ell_3}{q}\right).
\end{align*}
\end{thmb}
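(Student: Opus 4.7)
The strategy is to decompose $\Sigma(2,p,q)$ along invariant Seifert tori, reduce each local contribution to a lens-space $\rho$-invariant given by Theorem A, and obtain the global constant $1-2/(pq)$ from the Atiyah--Singer $G$-signature theorem applied to a $\tau$-equivariant bounding 4-manifold.

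The starting observation is that the involution $\tau(x,y,z)=(-x,y,z)$ is a part of the Seifert $S^1$-action on $\Sigma(2,p,q)$ and fixes exactly the multiplicity-$2$ singular fiber $F_2$, while acting freely on tubular neighborhoods of the multiplicity-$p$ and multiplicity-$q$ fibers. The plan is to write $\Sigma(2,p,q)=N_p\cup P\cup N_q$ as a union along two invariant tori, where $N_p$ and $N_q$ are tubular neighborhoods of the non-fixed singular fibers and $P$ is the remaining Seifert piece containing $F_2$. Filling each of $N_p$ and $N_q$ back with a solid torus produces lens spaces carrying involutions of the form $[z_1,z_2]\mapsto [z_1,-z_2]$, which is precisely the second case of Theorem A with Seifert invariants $(p,b_2)$ and $(q,b_3)$. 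An APS cut-and-paste argument on a long-necked metric, using that the Dirac-type operator $B^{ev}_{\Ad\alpha}$ is invertible on the cutting tori when restricted to the non-trivial character components of $\Ad\alpha|_{T^2}$, identifies each local $\eta$-contribution with the lens-space $\rho$-invariant having $\ell_2$ (respectively $\ell_3$) in the role of the parameter $\ell$ of Theorem A. This directly produces the two trigonometric sums in the statement.

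For the global constant, I would construct a $\tau$-equivariant 4-manifold $W$ with $\partial W=\Sigma(2,p,q)$ sharing a common bounding via the (minus) filled lens spaces above; the plumbed 4-manifold associated to the star-shaped graph of $\Sigma(2,p,q)$, equipped with the involution induced by $(-x,y,z)$ on the central $(-2)$-sphere, is a natural choice. The fixed set $\Fix(\tilde\tau)\subset W$ is then a 2-disk capping off $F_2$, with normal bundle carrying the $-1$ action and self-intersection related to the Seifert Euler number $1/(2pq)$. Feeding this into the $G$-signature formula, together with the explicit identification of $\Ad\alpha|_{F_2}$ (which is a rotation determined by the rotation numbers $1,\ell_2,\ell_3$ of $\alpha$), produces the constant $1-\tfrac{2}{pq}$, where the leading $1$ reflects the rank-one trivial summand of the $SO(3)$ adjoint representation along the fixed fiber.

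The main obstacle is likely to be the rigorous local-to-global identification, since $\Ad\alpha$ is a non-abelian flat $SO(3)$-bundle whose restriction to each filled-in lens space is only defined after a suitable choice of refinement: one must pass to a finite cyclic cover of each solid torus on which $\Ad\alpha$ abelianizes, track the trace of the adjoint lift $\nu$ on each eigenline of $\Ad\alpha|_{T^2}$, and verify compatibility with the canonical lifts used in Theorem A. Once this bookkeeping is completed, additivity of $\rho$-invariants under coverings assembles the three eigenline contributions into the stated sums, and the $G$-signature correction delivers the constant.
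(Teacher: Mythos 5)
Your outline is a genuinely different route from the paper's, but as it stands it has real gaps. The central one is the local-to-global step: equivariant $\eta$- and $\rho$-invariants are not additive under cutting $\Sigma(2,p,q)$ along the invariant Seifert tori, and stretching the neck produces correction terms governed by the kernel of the tangential operator on $T^2$ (Wall non-additivity/Kirk--Lesch type terms). The restriction of $\Ad\alpha$ to each splitting torus always contains a trivial rank-one summand, and for the comparison representation $\theta$ the restriction is entirely trivial, so the operator on the cutting tori is never invertible; the invertibility you invoke ``on the non-trivial character components'' excludes precisely the part that generates the corrections, which is where the whole difficulty sits. In addition, $\Ad\alpha$ does not extend over the filling solid tori, so identifying each local contribution with a lens-space $\rho$-invariant of Theorem A is not automatic. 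The paper sidesteps all of this: it takes the flat cobordism $W_0$ obtained from the mapping cylinder of the Seifert fibration $\Sigma \to S^2$ by removing open cones over $L(2,1)$, $L(p,b_2)$, $L(q,b_3)$; since $\Ad\alpha(h)=1$, the representation descends to $\pi_1(W_0)$, and a single application of Donnelly's $G$-signature theorem (for $\Ad\alpha$ and for $\theta$, with the same lift) relates $\rho_{\Ad\alpha}(\nu,\Sigma)$ to the lens-space invariants with no cut-and-paste of spectral invariants at all.

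Two further points would also fail as proposed. First, the constant: the flat bundle $E_{\Ad\alpha}$ does not extend over the simply connected plumbed $4$-manifold, so the twisted $G$-signature formula cannot be applied there with $\Ad\alpha$, and the fixed set of the extended involution on the plumbing is not a single disk capping the multiplicity-two fiber. In the paper the constant $1-\tfrac{2}{pq}$ comes out of the same formula on $W_0$: $\sign_{\Ad\theta}(\nu,W_0)=-\sign(W_0)=1$, while $4\int_{F_2}e(N(F_2))=-\tfrac{2}{pq}$ is the orbifold self-intersection of the punctured orbit sphere weighted by the equivariant Chern character ($\ch_{\nu}=3$ on $F_2$, $-1$ on the cylinder $F_1$). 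Second, the lift bookkeeping you defer to the end is not a formality but changes the answer: one must prove that the lift induced on $L(p,b_2)$ and $L(q,b_3)$ is the canonical one (the paper does this by an explicit path computation through $L(2,1)$), one must use the vanishing of the $\rho$-invariant of $L(2,1)$ with its type-$A$ involution, and the discrepancy between the adjoint lift $\Ad u=\diag(1,-1,-1)$ and the canonical lift on the trivial summand produces the extra terms $4\eta(\tau,L(p,b_2))+4\eta(\tau,L(q,b_3))$, which are exactly what convert the $\sin^2$ sums of Theorem A into the $\cos^2$ sums of Theorem B. So your claim that the splitting ``directly produces the two trigonometric sums in the statement'' is not correct as stated: naively adding two copies of Theorem A would give $\sin^2$ sums, not the formula you are asked to prove.
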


\medskip\noindent
\textbf{Acknowledgements.} The author thanks Nikolai Saveliev for sharing his expertise and is grateful to him for many invaluable discussions and suggestions on the earlier draft of this paper. A suggestion of John Lott communicated to the author by Nikolai Saveliev has been useful in some of the computations. The author also thanks Ken Baker for many stimulating conversations. 


\section{Preliminaries}
In this section we collected some well-established results which will be used later in the paper. 


\subsection{Bundle automorphisms}
Let $Y$ a closed oriented $3$-manifold with a smooth orientation preserving involution $\tau: Y \to Y$ generating a subgroup $\bZ/2$ of the group of diffeomorphisms of $Y$. Given a vector bundle $E \to Y$ such that $\tau^*E$ is isomorphic to $E$, we will follow Austin \cite{Austin} and consider the group $\widehat{\cG}$ of bundle automorphisms of $E$ lifting the elements of $\bZ/2$. We obviously have a short exact sequence
\[
1\longrightarrow \mathcal{G} \longrightarrow \mathcal{\widehat{G}}\longrightarrow \bZ/2 \longrightarrow 1,
\]
where $\cG$ is the group of gauge transformations of $E$. The group $\widehat{\cG}$ acts on the affine space $\cA$ of connections on $E$ via $g^* A = g\cdot A \cdot g^{-1}$ giving rise to a well-defined action $\tau^{\ast}: \cB \to \cB$ on the configuration space $\cB = \cA/\cG$. We are interested in the fixed point set of this action.

Let $A \in \cA$ be a connection whose gauge equivalence class is fixed by $\tau^{\ast}$ then there is a lift $\nu \in \widehat{\cG}$ such that $\nu^* A = A$. All such lifts generate the stabilizer $\widehat{\cG}_A$ of $A$ in the group $\widehat{\cG}$, and we have a short exact sequence 
\begin{align}\label{exact sequence of stabilizers}
1\longrightarrow \mathcal{G}_A \longrightarrow \mathcal{\widehat{G}}_A\longrightarrow \bZ/2 \longrightarrow 1,
\end{align}
where $\cG_A$ is the stabilizer of $A$ in the gauge group $\cG$. Replacing $A$ by a gauge equivalent connection results in conjugating this exact sequence by the gauge transformation. The following special cases of this construction will be of particular importance in this paper.

\begin{example}
Let $E$ be a real vector bundle of rank three and $A$ an irreducible $SO(3)$ connection. The stabilizer $\cG_A$ is then trivial and it follows from \eqref{exact sequence of stabilizers} that there exists a unique lift $\nu$ leaving $A$ invariant. This lift has the property that $\nu^2 = 1$. 
\end{example}

\begin{example}
Let $E$ be a complex vector bundle of rank two and $A$ an irreducible $SU(2)$ connection. Then $\cG_A = \{\pm 1\}$ and the lifts fixing $A$ satisfy $\nu^2 = \pm 1$ with the sign depending on whether $\widehat{\cG}_A$ is isomorphic to $\bZ/2\,\oplus\,\bZ/2$ or $\bZ/4$. Note that since every $SU(2)$-bundle $E$ over $Y$ is trivial, we may fix a trivialization $E = Y \times \bC^2$ and view our lifts as functions $\nu: Y \times \bC^2 \rightarrow Y \times \bC^2$ given by $\nu(y,\xi) = (\tau(y), u(y) \xi)$ with $u: Y \rightarrow SU(2)$. According to \cite[page 38]{ruberman_saveliev_2004rohlin}), every lift $\nu$ is then equivalent up to gauge transformation to a lift with the constant $u: Y\rightarrow SU(2)$.
\end{example}

\begin{example}
Let $\alpha: \pi_1(Y) \rightarrow U(1)$ be a representation such that $\tau^{\ast}\alpha$ is conjugate to $\alpha$ and choose $u \in U(1)$ with $\tau^{\ast}\alpha=u^{-1} \alpha u = \alpha$. This choice of $u$ determines a lift $\nu: E_{\alpha} \to E_{\alpha}$ as follows. Lift $\tau$ to a unique involution $\widetilde{\tau}:\widetilde Y \rightarrow \widetilde Y$ on the universal cover of $Y$ and define $\widetilde{\nu}: \widetilde Y \times \mathbb C \to \widetilde Y \times \mathbb C$ by the formula 
\[
\widetilde{\nu}(y,\xi)=(\widetilde{\tau}(y),\, u\cdot \xi).
\]
Note that $\widetilde \nu$ preserves the product connection on $\widetilde Y \times \mathbb C$. The flat bundle $E_{\alpha}$ is the quotient of $\widetilde Y \times \mathbb C$ by the action of $\pi_1(Y)$ given by the formula 
\[
\gamma(y,\xi)=(y\cdot\gamma,\, \alpha(\gamma^{-1})\cdot\xi).
\]
Since $\widetilde \nu$ obviously commutes with this action it descends to a well defined map $\nu: E_{\alpha} \to E_{\alpha}$ lifting $\tau$. The lift $\nu$ will be referred to as a \emph{constant lift}; the canonical lift of the introduction is then the constant lift corresponding to the choice of $u = 1$. Note that a constant lift is uniquely determined by its action on the fiber of $E_{\alpha}$ over any fixed point of $\tau: Y \to Y$. Also note that any lift $\nu: E_{\alpha} \to E_{\alpha}$ whose induced lift $\widetilde\nu: \widetilde Y\times \mathbb C \rightarrow \widetilde Y \times \mathbb C$ preserves the product connection must be a constant lift, therefore, any lift $\nu: E_{\alpha} \to E_{\alpha}$ is gauge equivalent to a constant lift.
\end{example}

It is not difficult to see that conjugating a representation $\alpha: \pi_1 (Y) \to U(n)$ replaces the operator $B^{ev}_{\alpha}$ by a conjugate and hence does not change the values of $\eta_{\alpha} (Y)$ and $\rho_{\alpha} (Y)$. Similarly, replacing a lift $\nu:E_{\alpha} \to E_{\alpha}$ by a gauge equivalent lift does not change the values of $\eta_{\alpha} (\nu,Y)$ and $\rho_{\alpha} (\nu,Y)$. 
\subsection{Index theorems}
The classical $\eta$-invariant of the odd signature operator arises naturally as a correction term in the index formula for the signature operator on a manifold with boundary. Let $Y$ be the boundary of a smooth compact oriented $4$-manifold $X$ and suppose that a representation $\pi_1(Y) \rightarrow U(n)$ extends to a representation $\pi_1(X) \to U(n)$ denoted again by $\alpha$. Then we have a flat bundle $E_{\alpha}$ over $X$ whose restriction to the boundary is the given flat bundle over $Y$. For a Riemannian metric on $X$ which restricts to a product metric near the boundary $Y$, the twisted signature \cite{APS2} is given by the formula
\begin{align*}
\sign_{\alpha}(X)=\int_{X} \ch(E_{\alpha})\,\mathcal{L}-\eta_{\alpha}(Y)
\end{align*}  
where $\mathcal{L}=p_1(X)/3$ and $p_1(X)$ is the first Pontryagin class of the tangent bundle of $X$. Since the bundle $E_{\alpha}$ is flat, the Chern character $\ch(E_{\alpha})$ equals $n$, the rank of $E_{\alpha}$.

Let $\tau: X \to X$ be an orientation preserving isometric involution extending the involution $\tau: Y \to Y$ on the boundary $Y$ and suppose that $\alpha: \pi_1(X) \rightarrow U(n)$ is such that $\tau^{\ast}\alpha$ is conjugate to $\alpha$. Given an extension of the lift $\nu: E_{\alpha} \to E_{\alpha}$ to the flat bundle over $X$, we have the following formula 
\begin{align*}
\sign_{\alpha}(\nu,X)=\int_{X^{\tau}} \ch_{\nu}(E_{\alpha})\,\mathcal{L}(X^{\tau})-\eta_{\alpha}(\nu,Y),
\end{align*}
see Donnelly \cite{Don78}. In general, the fixed set $X^{\tau}$ of the involution $\tau$ is a disjoint union of isolated fixed points and $2$-dimensional fixed surfaces, possibly with non-empty intersection with the boundary. Since the bundle $E_{\alpha}$ is flat, the equivariant Chern character $\ch_{\nu}(E_{\alpha})$ is just the trace of the lift $\nu$ acting on the fibers over the fixed point set. The class $\mathcal{L}(X^{\tau})$ is a certain combination of characteristic classes; in the special case when $X^{\tau}$ consists of only $2$-dimensional components, $\mathcal{L}(X^{\tau}) = e(N(X^{\tau}))$, the Euler class of the normal bundle to the fixed point set.

Choose $\alpha$ to be the trivial representation $\theta: \pi_1 (X) \to U(n)$ and use the same lift $\nu$ to obtain the formula
\begin{align*}
\sign_{\theta}(\tau,X)=\int_{X^{\tau}} \ch_{\nu}(E_{\theta})\,\mathcal{L}(X^{\tau})-\eta_{\theta}(\nu,Y).
\end{align*}
The difference of the last two displayed formulas then results in
\begin{align*}
\rho_{\alpha}(\nu,Y)=\sign_{\theta}(\nu,X)-\sign_{\alpha}(\nu,X).
\end{align*}
Another application of the above formula to the product $Y\times [0,1]$ as in \cite{APS2} yields the following results.

\begin{theorem} 
For a choice of equivariant representation $\alpha$ and a lift $\nu$, the invariant $\rho_{\alpha}(\nu,Y)$ is independent of the choice of $\tau$-invariant metric and is a diffeomorphism invariant of $(Y,\alpha,\tau)$, i.e. it is invariant under diffeomorphisms of $Y$ that commute with the involution $\tau$. 
\end{theorem}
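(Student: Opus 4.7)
The plan is to apply the equivariant signature formula of Donnelly displayed above to a carefully chosen $\tau$-invariant cobordism, in direct analogy with the Atiyah--Patodi--Singer argument for the classical $\rho$-invariant. Since the $\rho$-invariant is built from $\eta_{\alpha} - \eta_{\theta}$, one applies the formula twice, to $\alpha$ and to $\theta$ with the same lift $\nu$, and takes the difference; this is exactly what was done in the paragraph preceding the theorem to identify $\rho_{\alpha}(\nu,Y)$ with a difference of twisted signatures.

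For metric independence, let $g_0,g_1$ be two $\tau$-invariant metrics on $Y$. I would form $X = Y\times [0,1]$, equipped with any $(\tau\times \Id)$-invariant Riemannian metric that is the product metric with $g_i$ in a collar of $Y\times\{i\}$ (such a metric exists by averaging a generic interpolation over the $\bZ/2$-action). The representation $\alpha$, the lift $\nu$, and the trivial pair $(\theta,\nu)$ all extend to $X$ by pullback along the projection $X\to Y$, and the fixed set is the cylinder $X^{\tau} = Y^{\tau}\times [0,1]$. Applying Donnelly's formula on $X$ to $\alpha$ and to $\theta$ and subtracting gives
\[
\rho_{\alpha}(\nu,Y,g_1) - \rho_{\alpha}(\nu,Y,g_0) = \int_{X^{\tau}}\bigl(\ch_{\nu}(E_{\alpha}) - \ch_{\nu}(E_{\theta})\bigr)\,\mathcal L(X^{\tau}) - \bigl(\sign_{\alpha}(\nu,X) - \sign_{\theta}(\nu,X)\bigr).
\]
The second parenthesis vanishes because the intersection form on $H^2(X;\bC)$ is trivial, so both twisted signatures of the cylinder are zero. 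For the first term, by Example~2.3 one may assume $\nu$ is the constant lift determined by a single element $u\in U(1)$; with the same $u$ used on both $E_{\alpha}$ and $E_{\theta}$, the equivariant Chern characters, which are just the fiberwise traces of $\nu$ over $X^{\tau}$, coincide pointwise, so the integrand vanishes.

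For diffeomorphism invariance, let $f\colon Y\to Y$ satisfy $f\circ\tau = \tau\circ f$. Then $f$ is an isometry from $(Y,f^{\ast}g)$ to $(Y,g)$ and intertwines the data $(\alpha,\nu)$ with $(f^{\ast}\alpha, f^{\ast}\nu)$; pullback therefore identifies the twisted signature operators, preserving eigenvalues and the $\nu^{\ast}$-traces on eigenspaces, from which $\rho_{\alpha}(\nu,Y,g) = \rho_{f^{\ast}\alpha}(f^{\ast}\nu,Y,f^{\ast}g)$ follows directly from the definitions, and metric independence allows replacing $f^{\ast}g$ by $g$. The principal technical point is the pointwise cancellation of the local integrand on $X^{\tau}$, which in turn relies on the constant-lift normalization of Example~2.3 that makes the \emph{same} fiber action $u$ define $\nu$ on both $E_{\alpha}$ and $E_{\theta}$; without this normalization one would need to track gauge representatives more carefully.
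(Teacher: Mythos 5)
Your argument is essentially the paper's own proof: the paper simply invokes the Atiyah--Patodi--Singer cylinder trick, applying Donnelly's formula on $Y\times[0,1]$ to $\alpha$ and $\theta$ with the same lift $\nu$ and subtracting, exactly as you do, and treats diffeomorphism invariance by the same pullback argument. The only minor imprecision is that the vanishing of the signature terms should be phrased for the (twisted, $\nu$-equivariant) intersection form on the image of $H^2(X,\partial X;E_{\alpha})$ in $H^2(X;E_{\alpha})$ --- which does vanish for a cylinder --- rather than for $H^2(X;\bC)$ itself, which need not be trivial.
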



\subsection{Involutions on lens spaces}
The classification of smooth involutions on lens spaces with one-dimensional fixed point sets was given by Hodgson and Rubinstein \cite{hodgson1985involutions}. In this section we summarize some of their main results that will be used in this paper. 

Let $V=S^{1}\times D^{2} \subset \mathbb C^2$ denote a solid torus. Then every orientation preserving involution on $V$ is strongly equivalent to one of the following\,:
\begin{itemize}
\item \; $g_1 (t,z) = (\bar{t},\bar{z})$ 
\item \; $g_2 (t,z) = (t,-z)$ 
\item \; $g_3 (t,z) = (-t,z)$ 
\item \; $g_4 (t,z) = (-t,-z)$ 
\end{itemize}  
The involution $g_1$ fixes two properly embedded arcs in $V$ with orbit space the $3$-ball $B^3$. The fixed arcs in $V$ project to properly embedded unknotted arcs in the orbit space. The involution $g_2$ has a fixed core as the fixed point set and the quotient $V/g_2$ is again a solid torus whose core is the image of the fixed set. The involutions $g_3$ and $g_4$ are free and equivalent but not strongly equivalent. The orbit spaces are again solid tori. 

We next describe smooth orientation preserving involutions $\tau$ on lens spaces $Y=L(p,q)$ with non-empty fixed point sets. Let $Y=V \cup V^{\prime}$ denote a Heegaard splitting consisting of $\tau$-invariant solid tori $V$ and $V^{\prime}$. Then $\tau$ is strongly equivalent to one of the following\,:

\bigskip\noindent\textbf{Type A.}\; If $\tau \vv V$ is of type $g_1$ then $\tau \vv V^{\prime}$ is also of type $g_1$. The orbit space $Y/\tau$ is the $3$-sphere $S^3$. The fixed point set consists of two circles if $p$ is even and one circle if $p$ is odd. The circles project to a link or knot in $S^3$. This involution can be given explicitly by $\tau ([z_1,z_2]) = [\bar{z_1},\bar{z_2}]$.

\bigskip\noindent\textbf{Type B.}\; If $\tau \vv V$ is of type $g_2$, then there are two cases. If $p$ is even then $\tau \vv V^{\prime}$ is also of type $g_2$. The fixed point set consists of two core circles in each solid tori, and the orbit space is $L(p/2,q)$. If $p$ is odd, then $\tau \vv V^{\prime}$ is a free involution of type $g_3$ or $g_4$. The fixed point set is the core circle in $V$ and the orbit space is $L(p,2q)$. This involution can be given explicitly by $\tau ([z_1,z_2]) = [z_1,-z_2]$.

\bigskip\noindent\textbf{Type $\mathbf B^{\prime}$.} If $\tau \vv V'$ is of type $g_2$ then there are two cases. If $p$ is even then this involution is strongly equivalent to $\textbf{B}$. If $p$ is odd, then $\tau \vv V$ is of type $g_3$ or $g_4$. The fixed point set in the core circle in $V$ and the orbit space is $L(p,2q^{\ast})$ where $qq^{\ast}\equiv 1 \pmod p$. This involution can be given explicitly by $\tau ([z_1,z_2]) = [-z_1,z_2]$.

\bigskip\noindent\textbf{Types C and $\mathbf C^{\prime}$.} If the restriction of $\tau$ to $\partial V=\partial V^{\prime}$ is orientation-reversing with non-empty fixed set, then it is strongly equivalent to a linear involution with matrix
\begin{align*}
\pm
\begin{pmatrix}
-q & p \\ p^{\prime} & q
\end{pmatrix}
\end{align*}
where $p^{\prime}$ satisfies $pp^{\prime}+q^2=1$. This lead to two more types of involutions.

\bigskip The following theorem is the main classification result of Hodgson and Rubinstein.

\begin{theorem}\cite[p.83]{hodgson1985involutions}
Every smooth orientation preserving involution on a lens space $L(p,q)$ with fixed points is strongly equivalent to one of the involutions $A$, $B$, $B^{\prime}$, $C$, $C^{\prime}$. If $p=2$ then all of these involutions are strongly equivalent to each other.
\end{theorem}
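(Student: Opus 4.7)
The plan is to reduce the global classification to a gluing problem between the two $\tau$--invariant solid tori of a Heegaard splitting, and to appeal to the already-recorded classification of involutions on $V$. First I would establish the existence of a $\tau$--equivariant genus--one Heegaard splitting $L(p,q)=V\cup V'$. For lens spaces this is the central technical input: one can start from any Heegaard torus $T\subset L(p,q)$ and use equivariant isotopy arguments (for instance, the equivariant loop theorem together with the fact that any incompressible torus in a lens space is compressible, or an equivariant version of Waldhausen's uniqueness theorem for Heegaard splittings of the $3$--sphere and lens spaces) to push $T$ to a $\tau$--invariant position. This is the step I expect to be the main obstacle, since it requires knowing that the involution respects \emph{some} genus--one splitting; without it, the local classification on solid tori is useless.

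Granted the $\tau$--invariant splitting, the restriction $\tau|_V$ is strongly equivalent to one of the four model involutions $g_1,\dots,g_4$ recorded above, and similarly for $\tau|_{V'}$. Not all sixteen combinations are realizable: the two restrictions must agree on the common boundary torus $\partial V=\partial V'$, so the next step is to examine the induced action on $T^2$. If $\tau|_V$ is of type $g_1$ (complex conjugation, orientation--reversing on $T^2$), then $\tau|_{V'}$ must also be orientation--reversing on $T^2$, and among $g_1,\dots,g_4$ only $g_1$ qualifies with a non--empty fixed set in $V'$; this produces Type A. If $\tau|_V$ is of type $g_2$ (fixed core, orientation--preserving on $T^2$), then $\tau|_{V'}$ must also be orientation--preserving; the parity of $p$ then forces $\tau|_{V'}$ to be $g_2$ (when $p$ is even) or a free $g_3,g_4$ (when $p$ is odd), giving Types B and $B'$ depending on which solid torus carries the fixed core. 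The remaining cases, where $\tau$ exchanges the two handlebodies or acts by an orientation--reversing linear map on the boundary, are handled by writing the gluing in the Heegaard basis; the requirement that the gluing matrix $\begin{pmatrix}-q&p\\p'&q\end{pmatrix}$ be $\tau$--equivariant and have $pp'+q^2=1$ selects the two remaining families C and $C'$.

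For the identification with the explicit formulas $\tau([z_1,z_2])=[\bar z_1,\bar z_2]$, $[z_1,-z_2]$, $[-z_1,z_2]$, I would verify directly that these involutions preserve the standard genus--one splitting of $L(p,q)$ coming from $\{|z_1|\le |z_2|\}\cup\{|z_1|\ge |z_2|\}$, compute their restrictions to each solid torus, and match them with $g_1$, $g_2$, or $g_2$ on $V'$ respectively. Strong equivalence within each type is then obtained by combining the strong equivalence of the local models (already granted on $V$) with an isotopy of the gluing homeomorphism through $\tau$--equivariant maps.

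Finally, for the case $p=2$, the lens space is $\mathbb{R}P^3$ and one checks that each of the listed involutions has the same fixed--point data (a single circle projecting to a link of two components in $S^3$, namely the Hopf link, in the quotient). Because involutions on $\mathbb{R}P^3$ with one--dimensional fixed set are determined up to conjugation in $\mathrm{Diff}^+(\mathbb{R}P^3)$ by the isotopy class of their fixed set together with the action on $\pi_1$, and all five of $A,B,B',C,C'$ produce the same data there, an explicit equivariant isotopy realizes the strong equivalences. The hardest point overall remains the equivariant Heegaard step; once that is in place, the rest is a case analysis driven by the boundary compatibility and the local list $g_1,\dots,g_4$.
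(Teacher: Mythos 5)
First, a point of comparison: the paper does not prove this statement at all --- it is quoted, with the citation \cite[p.83]{hodgson1985involutions}, as an input from Hodgson and Rubinstein, so there is no in-paper argument to measure your proposal against; what follows assesses your outline on its own terms. Your plan correctly identifies where the difficulty sits, but it does not supply it. The existence of a $\tau$-invariant genus-one Heegaard splitting is not a preliminary reduction: it is essentially the whole content of the theorem. Once an invariant splitting is granted, the case analysis over the local models $g_1,\dots,g_4$ is comparatively routine; what Hodgson and Rubinstein actually prove, by a substantial argument involving one-sided (Klein bottle) splittings, equivariant surgery, and the equivariant loop/Dehn lemma machinery, is that a smooth involution with fixed points is conjugate to a standard (isometric) one, from which the invariant torus follows. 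Gesturing at ``the equivariant loop theorem'' or ``an equivariant Waldhausen theorem'' does not close this gap, because those tools do not directly produce an invariant Heegaard torus for an arbitrary smooth involution; establishing that is a theorem of the same order as the statement you are proving, and you acknowledge as much without resolving it.

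There are also concrete missteps in the parts you do carry out. The model $g_1(t,z)=(\bar t,\bar z)$ restricted to the boundary torus is the product of two reflections of circles, hence \emph{orientation-preserving} on $T^2$, so the orientation dichotomy you use to force Type A is incorrect as stated; the correct mechanism is the fixed-point behavior on $\partial V$: the restriction of $g_1$ to the boundary has fixed points, while $g_2$, $g_3$, $g_4$ act freely there, and this is what forces $\tau\vert V'$ to be of type $g_1$ as well. Likewise, Types C and C$'$ do not arise from an orientation-reversing restriction when both solid tori are preserved with their orientations; they correspond to a different configuration of the involution relative to the Heegaard torus, so ``writing the gluing in the Heegaard basis'' needs a genuine argument rather than a matrix ansatz. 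Finally, your treatment of $p=2$ assumes that involutions of $\mathbb{R}P^3$ with one-dimensional fixed set are determined up to equivariant diffeomorphism by the isotopy class of the fixed set and the $\pi_1$-action; that is itself a classification statement of comparable depth, so this step is circular. In short: plausible skeleton, but the equivariant Heegaard step and the $p=2$ step are genuine gaps, and the boundary-compatibility analysis needs to be redone via fixed-point data rather than orientations.
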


We conclude this section by describing the actions induced by the above involutions on $\pi_1 (L(p,q)) = \mathbb Z/p$ with a base point in the fixed point set. The involution of type A acts via multiplication by $-1$. The involutions of types $B$ and $B^{\prime}$ act via multiplication by $+1$. The involutions of types $C$ and $C^{\prime}$ acts via multiplication by $-q$ and $q$, respectively. Therefore, all representations $\pi_1(L(p,q)) \to U(1)$ are equivariant with respect to the involutions of types $B$ and $B^{\prime}$, all representations $\pi_1(L(p,q)) \to SU(2)$ are equivariant with respect to the involution of types $A$, and no unitary representations of $\pi_1(L(p,q))$ are equivariant with respect to the involutions of types $C$ and $C^{\prime}$ unless $q = 1\pmod p$. However, in the latter case the involutions $C$ and $C^{\prime}$ are strongly equivalent to involutions of either type $A$ or type $B=B^{\prime}$. Since the equivariant $\rho$-invariants are only defined for equivariant representations, we will disregard involutions of types $C$ and $C^{\prime}$.


\section{Calculations for lens spaces}
This section is dedicated to the proof of Theorem A. The lens space $L(p,q)$ will be denoted by $Y$, its universal covering space by $\widetilde Y = S^3$, and the cyclic group $\pi_1 (Y)$ by $G$. Note that $G$ has a canonical generator, and denote by $\alpha_{\ell}: G \rightarrow U(1)$ the representation that sends that generator to $e^{2\pi i \ell/p}$, $\ell=0,\cdots,p-1$. We will begin by relating the equivariant $\eta$-invariant of the odd signature operator $\widetilde{B}^{ev}: \Omega^{ev}(\widetilde Y) \to \Omega^{ev} (\widetilde Y)$ to the twisted equivariant $\eta$-invariants of $B^{ev}_{\alpha_{\ell}}: \Omega^{ev}(Y;E_{\alpha_{\ell}}) \to \Omega^{ev}(Y;E_{\alpha_{\ell}})$. We will use trivial bundle lifts throughout.

\begin{proposition} 
Let $\tau: Y \to Y$ be an isometric involution which lifts to an involution $\widetilde{\tau}: S^3 \to S^3$. Then
\[
\eta(\widetilde{\tau}g,S^3)\; =\; \sum_{\ell=0}^{p-1}\; \eta_{\alpha_{\ell}}(\tau,Y)\cdot\overline{\alpha}_{\ell}(g)\quad\text{for any $g \in G$}.
\]
\end{proposition}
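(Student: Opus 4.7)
The plan is a Fourier decomposition along the $p$-fold regular cover $\pi : S^3 \to Y$, expressing the eta-invariant upstairs as a $G$-character-weighted sum of twisted equivariant eta-invariants downstairs.

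The first step is to set up the Peter--Weyl style splitting of forms. Since $G$ acts freely on $S^3$ with quotient $Y$, the pushforward $\pi_* \mathbb C$ decomposes as $\bigoplus_{\ell=0}^{p-1} E_{\alpha_\ell}$ as flat line bundles over $Y$, and pulling forms back produces a $G$-equivariant isomorphism
\[
\Omega^{ev}(S^3) \;\cong\; \bigoplus_{\ell=0}^{p-1} \Omega^{ev}(Y; E_{\alpha_\ell}).
\]
With the convention $E_{\alpha_\ell} = \widetilde Y \times_G \mathbb C$ from the preliminaries, under which $\gamma \in G$ acts on the $\mathbb C$ fiber by $\alpha_\ell(\gamma)^{-1}$, sections of $E_{\alpha_\ell}$ correspond to forms on $S^3$ on which a deck transformation $g$ acts by the scalar $\overline{\alpha_\ell(g)}$. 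Under this identification, the untwisted odd-signature operator $\widetilde B^{ev}$ on $S^3$ is carried onto the direct sum $\bigoplus_\ell B^{ev}_{\alpha_\ell}$, so every $\widetilde B^{ev}$-eigenspace decomposes as $\widetilde W_\lambda = \bigoplus_\ell W_{\lambda,\alpha_\ell}$.

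The second step is to track how the lift $\widetilde{\tau}$ interacts with this decomposition. Because $\tau_{\ast}$ is trivial on $\pi_1(Y)$ in the present setting, $\widetilde{\tau}$ commutes with every $g \in G$, so $\widetilde{\tau}^{\ast}$ preserves each isotypic summand; and because we are using trivial bundle lifts ($u=1$), the induced lift on $\widetilde Y \times \mathbb C$ is $(\widetilde{\tau}, \mathrm{id})$, which descends to the constant lift $\nu^{\ast}$ on each $\Omega^{ev}(Y; E_{\alpha_\ell})$. Combining these observations,
\[
\Tr\bigl((\widetilde{\tau} g)^{\ast} \vv \widetilde W_\lambda \bigr) \;=\; \sum_{\ell=0}^{p-1} \overline{\alpha_\ell(g)} \cdot \Tr\bigl(\nu^{\ast} \vv W_{\lambda,\alpha_\ell}\bigr).
\]

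Substituting this trace into the defining zeta series for $\eta(\widetilde{\tau} g, S^3)(s)$, swapping the finite sum over $\ell$ with the sum over $\lambda$, and meromorphically continuing term by term to $s=0$ yields the claimed identity. The only delicate point is book-keeping the sign/conjugation conventions in the isotypic decomposition, which is what forces $\overline{\alpha_\ell(g)}$ rather than $\alpha_\ell(g)$ to appear in the final formula; once that convention is pinned down consistently with the construction of $E_{\alpha_\ell}$, the identity follows formally.
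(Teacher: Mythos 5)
Your proposal is correct and follows essentially the same route as the paper: decompose each eigenspace of the untwisted odd signature operator on $S^3$ into $G$-isotypic pieces identified with eigenspaces of the twisted operators $B^{ev}_{\alpha_\ell}$ on $Y$, observe that $(\widetilde{\tau}g)^{\ast}$ acts on the $\alpha_\ell$-isotypic piece with the extra scalar $\overline{\alpha}_\ell(g)$, and conclude by summing the eta series and invoking uniqueness of the meromorphic continuation. The only difference is presentational: you phrase the decomposition via the pushforward $\pi_{\ast}\mathbb{C} \cong \bigoplus_\ell E_{\alpha_\ell}$ and make the trivial-lift and $\tau_{\ast}$-triviality hypotheses explicit, whereas the paper works directly with the $\bZ[G]$-module structure of the eigenspaces, but the substance is identical.
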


\begin{proof}
The equivariant $\eta$-invariant is defined as the value at $s = 0$ of the meromorphic continuation to the complex plane of the function 
\[
\eta(\widetilde{\tau}g,S^3)(s) = \sum_{\lambda \neq 0}\; \dfrac{\sign \lambda \cdot \Tr(\widetilde{\tau}^{\ast}g^{\ast}\vv W_{\lambda})}{\abs{\lambda}^{s}}\,,
\]
where $W_{\lambda}$ is the $\lambda$-eigenspace of the operator $B^{ev}:\Omega (S^3) \rightarrow \Omega(S^3)$. Each $W_{\lambda}$ is a $\bZ[G]$-module which can be decomposed into irreducible submodules
\[
W_{\lambda} = \bigoplus_{\ell=0}^{p-1}\; W_{\lambda,\alpha_{\ell}}
\]
so that 
\[
\Tr(\widetilde{\tau}^{\ast}g^{\ast}\vv W_{\lambda})=\sum_{\ell=0}^{p-1}\; \Tr(\widetilde{\tau}^{\ast}g^{\ast}\vv W_{\lambda,\alpha_{\ell}}).
\]
On each of the modules $W_{\lambda,\alpha_{\ell}}$ the action of $g^*$ is diagonal of weight $g^{-\ell}$ hence
\[
\Tr(\widetilde{\tau}^{\ast}g^{\ast}\vv W_{\lambda,\alpha_{\ell}}) = \Tr(\widetilde{\tau}^{\ast}\vv W_{\lambda,\alpha_{\ell}})\cdot g^{-\ell} = \Tr(\tau^{\ast}\vv W_{\lambda,\alpha_{\ell}})\cdot\overline{\alpha}_{\ell}(g),
\]
where we used the natural identification of $W_{\lambda,\alpha_{\ell}}$ with the $\lambda$-eigenspace of the operator $B^{ev}_{\alpha_{\ell}}$. From this we easily conclude that
\[
\eta(\widetilde{\tau}g,S^3)(s) = 
\sum_{\ell=0}^{p-1}\; \eta_{\alpha_{\ell}}(\tau,Y)(s)\cdot \overline{\alpha}_{\ell}(g)
\]
for all $s$ whose real part is sufficiently large. The result now follows from the uniqueness of meromorphic continuation.
\end{proof}

\begin{corollary}\label{C:rho}
Let $\alpha: G \to U(1)$ be an arbitrary unitary representation and $\chi_{\alpha}: G \to U(1)$ its character. Then
\begin{equation}\label{E:rho}
\rho_{\alpha}(\tau,Y)=\dfrac{1}{\abs{G}}\;\sum_{g \neq 1}\;\eta(\widetilde{\tau}g,S^3)\cdot (\chi_{\alpha}(g)-\dim(\alpha)).
\end{equation}
\end{corollary}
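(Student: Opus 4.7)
The plan is to deduce this corollary from the preceding proposition by Fourier inversion on the finite abelian group $G=\mathbb Z/p$. The proposition expresses each equivariant $\eta$-invariant on $S^3$ as the $G$-Fourier series of the sequence $\{\eta_{\alpha_\ell}(\tau,Y)\}_{\ell=0}^{p-1}$. Since the characters $\alpha_0,\ldots,\alpha_{p-1}$ form an orthonormal basis of class functions on $G$ with respect to the inner product $(f_1,f_2)=|G|^{-1}\sum_g f_1(g)\overline{f_2(g)}$, the proposition can be inverted to give
\[
\eta_{\alpha_\ell}(\tau,Y)\;=\;\dfrac{1}{|G|}\sum_{g\in G}\;\eta(\widetilde{\tau}g,S^3)\cdot\alpha_\ell(g).
\]

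Next I would extend this formula from the irreducibles $\alpha_\ell$ to a general unitary representation $\alpha:G\to U(n)$. Because $G$ is finite abelian, $\alpha$ splits as a direct sum of one-dimensional characters with multiplicities $m_\ell$ satisfying $\chi_\alpha=\sum_\ell m_\ell\,\alpha_\ell$. The twisted operator $B^{ev}_\alpha$ is a corresponding orthogonal direct sum of the operators $B^{ev}_{\alpha_\ell}$, so the equivariant $\eta$-invariant is additive in $\alpha$. Combining this with the inversion formula above yields
\[
\eta_\alpha(\tau,Y)\;=\;\sum_\ell m_\ell\,\eta_{\alpha_\ell}(\tau,Y)\;=\;\dfrac{1}{|G|}\sum_{g\in G}\;\eta(\widetilde{\tau}g,S^3)\cdot\chi_\alpha(g).
\]

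Applied to the trivial representation $\theta:G\to U(n)$, whose character is the constant function $\dim(\alpha)$, the same identity gives
\[
\eta_\theta(\tau,Y)\;=\;\dfrac{1}{|G|}\sum_{g\in G}\;\eta(\widetilde{\tau}g,S^3)\cdot\dim(\alpha).
\]
Subtracting the two expressions produces the equivariant $\rho$-invariant as an average of $\eta(\widetilde{\tau}g,S^3)$ weighted by $\chi_\alpha(g)-\dim(\alpha)$. The $g=1$ contribution automatically vanishes because $\chi_\alpha(1)=\dim(\alpha)$, allowing the sum to be restricted to $g\neq 1$ and giving exactly \eqref{E:rho}.

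There is no substantive obstacle here; the argument is a routine Fourier inversion once the proposition is in hand. The only points requiring a word of care are the additivity of the equivariant $\eta$-invariant under direct sum of representations (which follows from the block-diagonal decomposition of $B^{ev}_\alpha$ and the compatibility of the lifts, since we are using trivial constant lifts throughout as specified before the proposition) and the justification that the Fourier inversion, valid at large $\operatorname{Re}(s)$, survives meromorphic continuation to $s=0$ where each $\eta$-function is regular.
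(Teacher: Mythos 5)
Your argument is correct and is essentially the paper's own proof: Fourier inversion of the proposition gives $\eta_{\alpha}(\tau,Y)=\frac{1}{\abs{G}}\sum_{g\in G}\eta(\widetilde{\tau}g,S^3)\,\chi_{\alpha}(g)$, and subtracting the same identity for the trivial representation (whose character is the constant $\dim(\alpha)$, so the $g=1$ term cancels) yields \eqref{E:rho}. Your explicit remarks on additivity under direct sums and on the behavior of the inversion under meromorphic continuation merely spell out details the paper leaves implicit.
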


\begin{proof}
This easily follows from the definition of the equivariant $\rho$-invariant and the formula 
\[
\eta_{\alpha}(\tau,Y)=\dfrac{1}{\abs{G}}\;\sum_{g\in G}\;\eta(\widetilde{\tau}g,S^3)\cdot\chi_{\alpha}(g)
\]
obtained by applying the Fourier transform to the formula in the above proposition.
\end{proof}


\subsection{Involutions of types $B$ and $B^{\prime}$}
Let $\tau$ be the involution $\tau ([z_1,z_2]) = [-z_1,z_2]$. Write the elements of $G$ in the form $g^k$ with $k = 0,\ldots,p-1$ using the canonical generator $g \in \pi_1 (L(p,q))$ then the formula \eqref{E:rho} gives
\[
\rho_{\alpha}(\tau,Y)=\dfrac{1}{p}\;\sum_{k=1}^{p-1}\; \eta(\widetilde{\tau} g^k,S^3)\cdot(e^{2 \pi i k \ell/p}-1).
\]
For any $k \ne 0$, the map $\widetilde{\tau} g^k$ has an isolated fixed point $(0,0)$ with the rotation number $(p+2k, 2kq) \pmod {2p}$ hence
\[
\eta(\widetilde{\tau} g^k,S^3)=-\cot\left(\dfrac{\pi}{2}+\dfrac{\pi k}{p}\right)\cot\left(\dfrac{\pi k q}{p}\right),
\]
see \cite{APS2}, where it is referred to as the signature defect. The above formula for the $\rho$-invariant then becomes
\[
\rho_{\alpha}(\tau,Y) = \dfrac{1}{p}\;\sum_{k=1}^{p-1}\; \tan\left(\dfrac{\pi k}{p}\right)\cot\left(\dfrac{\pi k q}{p}\right)\left(e^{2\pi i k\ell/p}-1\right).
\]
That this formula matches that of Theorem A can be easily seen by re-writing
\begin{multline}\notag
\rho_{\alpha}(\tau,Y)=\dfrac{1}{2p}\;\sum_{k=1}^{p-1}\; \tan\left(\dfrac{\pi k}{p}\right)\cot\left(\dfrac{\pi k q}{p}\right)\left(e^{2\pi ik \ell/p}-1\right)  \\
+\dfrac{1}{2p}\;\sum_{k=1}^{p-1}\; \tan\left(\dfrac{\pi (p-k)}{p}\right)\cot\left(\dfrac{\pi (p-k) q}{p}\right)\left(e^{2\pi i(p-k) \ell/p}-1\right)
\end{multline}
and then simplifying to
\begin{multline}\notag
\rho_{\alpha}(\tau,Y)=\dfrac{1}{2p}\;\sum_{k=1}^{p-1}\; \tan\left(\dfrac{\pi k}{p}\right)\cot\left(\dfrac{\pi k q}{p}\right)\left(e^{2\pi ik \ell/p}+e^{-2\pi ik \ell/p} - 2\right)  \\
=-\dfrac{2}{p}\;\sum_{k=1}^{p-1}\; \tan\left(\dfrac{\pi k}{p}\right)\cot\left(\dfrac{\pi k q}{p}\right)\sin^2\left(\dfrac{\pi k \ell}{p}\right).
\end{multline}
The other involution of Theorem A, given by the formula $\tau ([z_1,z_2])= [z_1, -z_2]$, is handled similarly: using the rotation number $(2k,p+2qk)\pmod {2p}$ we first obtain the answer
\begin{align*}
\rho_{\alpha}(\tau,Y)=\dfrac{1}{p}\;\sum_{k=1}^{p-1}\; \cot\left(\dfrac{\pi k}{p}\right)\tan\left(\dfrac{\pi k q}{p}\right)\left(e^{2\pi i k \ell/p}-1\right)
\end{align*}
and then simplify it to arrive at the formula of the theorem. 
\vspace{-0.20 cm} 
\subsection{Involution of type $A$}\label{S:complex}
The fixed point set of the involution $\widetilde\tau: D^4 \to D^4$ given by $\widetilde{\tau} (z_1,z_2) = (\overline z_1, \overline z_2)$ is a properly embedded disk $F \subset D^4$ with the boundary in $S^3$. This involution defines an involution on $S^3$ which descends to an involution $\tau$ on the lens space $Y = L(p,q)$ referred to as an involution of type $A$. Let $\alpha: \pi_1 (L(p,q)) \to SU(2)$ be a representation sending the canonical generator to 
\begin{align*}
\begin{pmatrix}
e^{2\pi i \ell /p} & 0 \\ 0 & e^{-2\pi i \ell /p} \\
\end{pmatrix}.
\end{align*}
Since $\tau$ acts on $\pi_1(L(p,q))$ via multiplication by $-1$, the representations $\tau^{\ast}(\alpha)$ and $\alpha$ are conjugate in $SU(2)$. Therefore, we have a well defined equivariant $\rho$-invariant $\rho_{\alpha}(\nu,L(p,q))$ for a certain lift $\nu: E_{\alpha} \to E_{\alpha}$.  According to Saveliev \cite{saveliev1999floer} this invariant vanishes for all $\alpha$; we offer here an alternative proof of that statement.

The $G$-signature theorem of Donnelly \cite{Don78} applied to $\widetilde{\tau}g$ in the extension
\[
1 \rightarrow \bZ/p \rightarrow D_{2p} \rightarrow \bZ/2\rightarrow 1
\]
where $\widetilde{\tau}$ is the complex conjugation involution on the $4$-ball yields the formula 
\[
0=\sign(\widetilde{\tau} g, D^4)=\int_{F} e(N(F)) -\eta(\widetilde{\tau} g, S^3).
\]
The two terms of the right hand side of this formula are metric dependent. Once we show that the integral term vanishes for a large class of metrics, the vanishing result for $\rho_{\alpha}(\nu, L(p,q))$ will follow from Corollary \ref{C:rho}.

To see that the integral vanishes, we attach an equivariant $2$-handle along the circle $\partial F = F\,\cap\,S^3$ with framing $0$ and choose a metric on $D^4$ that is product near the boundary and which restricts to the flat Euclidean metric on the $D^2$-normal disk bundle of $F$. In addition, choose the product of flat Euclidean metrics on the $2$-handle $D^2 \times D^2$. The result is a Riemannian $4$-manifold with an involution that has a $2$-sphere as its fixed point set. The vanishing of the integral in question is now a special case of the following lemma. 

\begin{lemma}
Let $(X_0,\tau)$ denote a smooth, compact $4$-manifold with boundary $S^3$ which admits an involution that has fixed point set $F$ of codimension two with non-empty intersection with the boundary. Choose a metric that is a product metric on the end and restricts to the flat Euclidean metric on the $D^2$ normal bundle of $F$. Then 
\begin{align*}
\int_F e(N(F))=0
\end{align*}
\end{lemma}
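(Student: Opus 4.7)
My plan is to apply Chern--Weil theory directly to the normal bundle $N(F) \to F$ and show that the Euler form vanishes pointwise as a differential form on $F$. Since $N(F)$ is an oriented rank-$2$ real vector bundle, its Euler form with respect to any metric connection $\nabla$ is $e(N(F)) = \mathrm{Pf}(\Omega_\nabla/(2\pi))$, where $\Omega_\nabla \in \Omega^2(F;\mathfrak{so}(2))$ is the curvature $2$-form. It therefore suffices to exhibit a metric connection on $N(F)$ whose curvature vanishes identically on $F$; the integral in question then vanishes for the trivial reason that the integrand does.

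First, I would fix a tubular neighborhood $U$ of $F$ in $X_0$ and identify it, via the normal exponential map, with the total space of the disk bundle $D(N(F))\to F$. Reading the hypothesis as saying that under this identification the metric splits as a true product, $g\vert_U = \pi^{\ast} g_F \oplus g_{\mathrm{flat}}$, where $\pi\colon U \to F$ is the bundle projection, $g_F$ is the restriction of $g$ to $F$, and $g_{\mathrm{flat}}$ is the fiberwise flat Euclidean metric, one obtains a canonical trivialization of $N(F)$ by constant fiber sections. In the concrete settings of the paper --- the disk $F \subset D^4$ fixed by complex conjugation, and the $2$-sphere obtained after attaching an equivariantly $0$-framed $2$-handle --- the normal bundle is in fact trivial by construction and the product form is built into the explicit choice of metric.

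Second, I would invoke the standard fact that the Levi--Civita connection of a Riemannian product metric is the direct sum of the Levi--Civita connections of the two factors. This implies that the metric connection $\nabla$ induced on $N(F) \subset TX_0\vert_F$ is the trivial flat connection in the chosen fiber trivialization. Hence $\Omega_\nabla \equiv 0$ on $F$, the Chern--Weil representative $e(N(F)) = \mathrm{Pf}(\Omega_\nabla/(2\pi))$ vanishes as a $2$-form on $F$, and consequently $\int_F e(N(F)) = 0$.

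The main obstacle I expect is the careful passage from the stated hypothesis to the product form of the metric in a neighborhood of $F$. The phrase "the metric restricts to the flat Euclidean metric on the $D^2$ normal bundle of $F$" must be interpreted as providing a genuine product decomposition $g\vert_U = \pi^{\ast} g_F \oplus g_{\mathrm{flat}}$, and not merely as flatness of the metric when restricted to individual normal fibers --- the latter would not force the normal connection to be flat. I expect this to follow from the Gauss lemma for the normal exponential map combined with the explicit construction of the metric in the cited examples; once this product form is secured, the Chern--Weil vanishing argument above is immediate.
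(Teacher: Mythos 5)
Your whole argument rests on the induced $SO(2)$-connection on $N(F)$ being flat over all of $F$, i.e.\ on reading the hypothesis as a genuine product (or ambient-flat) metric on an entire tubular neighborhood of $F$ --- and this, the step you yourself flag as the main obstacle, is exactly where the gap is. The repair you propose does not close it: the Gauss lemma only says radial geodesics meet distance circles orthogonally and places no restriction whatsoever on the normal holonomy, and flatness of the metric on each individual normal fiber is compatible with arbitrary curvature of the normal connection (fiberwise-flat disk bundles over a closed surface exist with any Euler number). Falling back on ``the explicit examples'' also cannot prove the lemma, which is stated for an arbitrary $(X_0,\tau)$. A further warning sign is that your proof uses neither the hypothesis that $F$ meets $S^3$ nor the product structure on the end, even though the integral is metric-dependent (the paper stresses this immediately before the lemma) and fixed surfaces of isometric involutions can perfectly well have nonzero Euler-form integral: later in the paper the fixed surface $F_2\subset W_0$ meets the boundary and has $\int_{F_2}e(N(F_2))=-1/2pq$. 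So the vanishing cannot come from a purely local argument unless the flat product structure near all of $F$ is actually established; even under the strongest reading of the hypothesis (the metric on the whole neighborhood is flat Euclidean), the correct local mechanism would be that $\Fix(\tau)$ is totally geodesic together with the Ricci equation, not the Gauss lemma.

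The paper's proof is organized precisely so as to avoid any pointwise claim on $F$: it attaches an equivariant $2$-handle with framing $0$ along $\partial F$, equipped with the product of flat Euclidean metrics, so that $F$ closes up to a closed fixed surface $\overline{F}$; then $\int_{\overline{F}}e(N(\overline{F}))=\overline{F}\cdot\overline{F}$, the self-intersection is computed topologically as $\lk(\partial F,k')=0$ because of the $0$-framing, and the handle contributes nothing to the integral since the metric there is an honest flat product, whence $\int_F e(N(F))=0$. In other words, flatness of the normal connection is used only over the handle core (where it holds by construction) and near $\partial F$ (so the metrics glue), while the vanishing over $F$ itself comes from the framing, a global input your argument never invokes. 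To salvage a local proof you would have to add, or genuinely derive, the statement that the connection induced on $N(F)$ is flat over all of $F$; as the proposal stands, that statement is assumed rather than proved.
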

\begin{proof}
Attach an equivariant $2$-handle $(D^2 \times D^2,\tau)$ with product Euclidean metric to $S^3$ with framing $0$ along $\partial F$. The result is a compact $4$-manifold with boundary $X$ and a closed surface $\overline{F}$ as its fixed point set and empty intersection with the boundary $\partial X$. The self intersection $\overline{F}$ can be computed as a topological invariant and we do this by taking a perturbation of core disk $D^{\prime}$ in the $2$-handle such that $D^{\prime}\,\cap\, S^3=k^{\prime}$ and a perturbation $F^{\prime}$ with $\partial F^{\prime}=k^{\prime}$. Then 
\begin{align*}
\overline{F}\cdot \overline{F}=\operatorname{lk}(\partial F, k^{\prime})=0 
\end{align*} 
because of the framing of the $2$-handle. On the other hand,
\begin{align*}
\overline{F}\cdot \overline{F}=\int_{\overline{F}} e(N(\overline{F}))=\int_F e(N(F)),
\end{align*}
since the integral over the $2$-handle vanishes. 
\end{proof}

\begin{remark}
We mention here the case when $p\geq2$ is even in Theorem A. The same formulas work if we ignore the singular terms of the summation. This is because the fixed point set of $\widetilde{\tau}g^{k}$ is no longer an isolated fixed point but a fixed $2$-disk when $k=p/2$ and by the lemma above the term $\eta(\widetilde{\tau}g^k,S^3)$ vanishes. Similarly for the other involution given by $\tau ([z_1,z_2])= [z_1, -z_2]$, the fixed point set is a fixed $2$-disk and $\eta(\widetilde{\tau}g^k,S^3)$ vanishes for $k=q^{\ast}p/2$, where $q^{\ast}$ is the multiplicative inverse of $q$ mod $p$.
\end{remark}
\section{Combinatorial interpretation}\label{S:sums}
In this section we interpret the equivariant $\rho$-invariants of Theorem A as a signed count of certain lattice points in the plane, along the lines of Casson-Gordon \cite{casson1986cobordism}, see also \cite{anderson1995lift}. Let $p$ and $q$ be relatively prime integers and $\ell$ an even integer. Define the function
\[
\delta^{\tau}(p;q,\ell)=\dfrac{2}{p}\;\sum_{k=1}^{p-1}\;\cot\left(\dfrac{\pi k}{p}\right)\cot\left(\dfrac{\pi q k}{p}+\dfrac{\pi}{2}\right)\sin^2\left(\dfrac{\pi k \ell}{p}\right),
\]
which should be viewed as a modification the Casson--Gordon \cite{casson1986cobordism} function
\[
\delta(p;q,\ell)=\dfrac{2}{p}\;\sum_{k=1}^{p-1}\;\cot\left(\dfrac{\pi k}{p}\right)\cot\left(\dfrac{\pi q k}{p}\right)\sin^2\left(\dfrac{\pi k \ell}{p}\right).
\]
According to Theorem A, the function $\delta^{\tau}(p;q,\ell)$ equals the equivariant $\rho$-invariant $\rho_{\alpha}(\tau,L(p,q))$ corresponding to the canonical lift of  the involution $\tau([z_1,z_2]) = (z_1,-z_2)$ and the representation $\alpha$ sending the canonical generator of $\pi_1 (L(p,q))$ to $e^{2\pi i \ell/p}$. A straightforward calculation yields
\begin{multline}\notag
\delta^{\tau}(p;q,\ell)=\dfrac{1}{2p}\; \sum_{k=1}^{p-1} \left(\dfrac{e^{2\pi ik/p}+1}{e^{2\pi ik/p} -1} \right)\left(\dfrac{e^{2\pi iqk/p}-1}{e^{2\pi iqk/p}+1}\right) e^{-2\pi i k \ell/p}\,(e^{2\pi i k \ell/p}-1)^2\\
=\dfrac{1}{2p}\sum_{t^p=1,t\neq 1} \left(\dfrac{t+1}{t-1}\right) \left(\dfrac{t^q-1}{t^q+1} \right) t^{-\ell}(t^{\ell}-1)^2.
\end{multline}
This can be further simplified using the identities 
\[
t^{-\ell}(t+1)\left(\dfrac{t^{\ell}-1}{t-1} \right)=t^{-\ell}+2t^{1-\ell}+2t^{2-\ell}+\cdots+2t^{-1}+1
\]
and 
\[
\;\, (t^q-1)\left(\dfrac{t^{\ell}-1}{t^q+1}\right)= t^{\ell}-2t^{q}+2t^{2q}-\cdots-2t^{q(q^{\ast}\ell-1)}+1,
\]
where $1 \le q^* \le p-1$ is to be found from the equation $qq^* = 1 \pmod p$. To verify the latter identity, write
\begin{align*}
\dfrac{t^{\ell}-1}{t^{q}+1}=\dfrac{u^{q^{\ast}\ell}-1}{u+1}=-(1-u+u^2-\cdots -u^{q^{\ast}\ell-1})
\end{align*}
with respect to the variable $u = t^q$ (remember that $\ell$ is assumed to be even) and multiply by $t^q-1$. We obtain
\begin{align*}
\delta^{\tau}(p;q,\ell)=\dfrac{1}{2p}\;\sum_{t^p=1}\; \left[\sum_{i=1}^{\ell-1} 2t^{-i}+t^{-\ell}+1 \right]\left[\sum_{j=1}^{q^{\ast}\ell-1} (-1)^j 2t^{qj}+t^{\ell}+1\right].
\end{align*}
Introduce the notation 
\[
\sum_{t^p=1} t^a = \delta (a) = 
\begin{cases}
\; p, &\quad\text{if $a = 0 \pmod p$}, \\
\; 0, &\quad\text{otherwise}.
\end{cases}
\]
then
\begin{multline}\notag
\delta^{\tau}(p;q,\ell)=\dfrac{1}{2p}\;\sum_{i=1}^{\ell-1}\sum_{j=1}^{q^{\ast}\ell-1}4(-1)^j \delta(qj-i)+\dfrac{1}{p}\;\sum_{i=1}^{\ell-1}\delta(\ell-i) \\ +\dfrac{1}{p}\;\sum_{i=1}^{\ell-1}\delta(-i)+\dfrac{1}{2p}\;\sum_{j=1}^{q^{\ast}\ell-1}2(-1)^j \delta(qj-\ell)+\dfrac{1}{2p}\;\sum_{j=1}^{q^{\ast}\ell-1}2(-1)^j \delta(qj) \\ + \dfrac{1}{2p}\,\delta(-\ell) + \dfrac {1}{2p}\,\delta(\ell)+1.
\end{multline}
Since some of the sums in this formula are zero when $\ell \neq 0 \pmod p$ this further simplifies to
\begin{multline}\notag
\delta^{\tau}(p;q,\ell)  =\dfrac{2}{p}\;\sum_{i=1}^{\ell-1}\sum_{j=1}^{q^{\ast}\ell-1}(-1)^j \delta(qj-i) \\ +\dfrac{1}{p}\;\sum_{j=1}^{q^{\ast}\ell-1}(-1)^j \delta(qj-\ell)+\dfrac{1}{p}\;\sum_{j=1}^{q^{\ast}\ell-1}(-1)^j \delta(qj)+1.
\end{multline}

This formula has the following combinatorial interpretation. Consider the $(i,j)$-lattice with $i = 0, 1,\ldots, \ell$ and $j = 1,2, \ldots, q^{\ast}\ell-1$. The first sum is a signed count of the lattice points in the interior of the rectangle with $qj = i \pmod p$. The second sum picks up points on the line $i=\ell$ that satisfy $qj = \ell \pmod p$. Finally, the third sum counts points on the $j$-axis that satisfy $qj = 0 \pmod p $.
Plot the function $\phi(j)=qj/p-i/p$ as a family of lines for $i=0,\ldots,\ell$. The convex hull of these lines is the parallelogram $\mathcal{P}(p;q,\ell)$ with vertices $(0,0), (0,-\ell/p), (q^{\ast} \ell-1, q(q^{\ast}\ell-1)/p),(q^{\ast}\ell-1,q(q^{\ast}\ell-1)/p-\ell/p)$. In terms of this parallelogram, the outcome of the above calculation can be stated as follows. 
\begin{theorem}
Let $p$ and $q$ denote non-zero, relatively prime positive integers and $\ell$ an even integer such that $\ell \ne 0\pmod p$. Then
\begin{align*}
\delta^{\tau}(p;q,\ell)=\#^s\; \mathcal{P}(p;q,\ell)+1
\end{align*}
where $\#^s$ denotes the following signed count of lattice points in $\mathcal{P}(p;q,\ell)$: the sign of a lattice point $(j,\phi(j))$ is given by $(-1)^j$, the origin is not counted, and points in the interior are counted with weight $2$.
\end{theorem}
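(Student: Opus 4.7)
The derivation just before the theorem has already reduced $\delta^{\tau}(p;q,\ell)$ to the identity
\begin{align*}
\delta^{\tau}(p;q,\ell)  = \dfrac{2}{p}\sum_{i=1}^{\ell-1}\sum_{j=1}^{q^{\ast}\ell-1}(-1)^j \delta(qj-i)
&+ \dfrac{1}{p}\sum_{j=1}^{q^{\ast}\ell-1}(-1)^j \delta(qj-\ell) \\
&+ \dfrac{1}{p}\sum_{j=1}^{q^{\ast}\ell-1}(-1)^j \delta(qj) + 1,
\end{align*}
so the remaining task is a purely combinatorial reinterpretation of this identity as a weighted signed count of lattice points in $\mathcal{P}(p;q,\ell)$; no further analytic work is required.

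First I would record that $\delta(a)/p$ is the indicator of the congruence $a \equiv 0 \pmod p$, so a nonzero summand $\delta(qj-i)$ corresponds precisely to the point $(j, \phi(j))$ with $\phi(j) = qj/p - i/p$ being an integer, i.e.\ a lattice point on the $i$-th line. Parametrizing $\mathcal{P}(p;q,\ell)$ by pairs $(j,i) \in \{0,\ldots,q^{\ast}\ell - 1\} \times \{0,\ldots,\ell\}$, these lattice points exhaust the lattice points of $\mathcal{P}$, and the $i$-coordinate stratifies them into three disjoint classes: the top edge $i=0$, the \emph{interior} $1 \le i \le \ell - 1$ (meaning strictly between the top and bottom edges), and the bottom edge $i = \ell$.

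Next I would match the three sums term-by-term to these three strata. The factor $2/p$ in the double sum multiplies the value $p$ of $\delta$ to give weight $2(-1)^j$ at each interior lattice point, matching the theorem's interior weighting; the factor $1/p$ in the second and third sums gives weight $(-1)^j$ at each bottom-edge and top-edge lattice point respectively, matching the boundary weighting. The common lower index $j = 1$ excludes precisely the origin $(0,0)$, since $\ell \not\equiv 0 \pmod p$ forces the left edge $j=0$ to contain no other lattice points (its only other endpoint $(0,-\ell/p)$ is not a lattice point).

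The one non-trivial point to check is the boundary bookkeeping: that the three $i$-strata $\{0\}$, $[1,\ell-1]$, $\{\ell\}$ partition every lattice point of $\mathcal{P}(p;q,\ell)$ without overlap, that the right edge $j = q^{\ast}\ell - 1$ is automatically included in whichever sum its $i$-coordinate falls into (so no separate term is needed), and that the origin is indeed the unique lattice point omitted by the starting index $j=1$. Once this small verification is made, assembling the three contributions and restoring the trailing $+1$ yields $\delta^{\tau}(p;q,\ell) = \#^s\,\mathcal{P}(p;q,\ell) + 1$, as claimed. The only real obstacle is this edge-case accounting, which is straightforward given the explicit ranges in the derived identity.
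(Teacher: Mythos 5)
Your proposal is correct and follows essentially the same route as the paper: the theorem there is explicitly presented as a restatement of the preceding root-of-unity calculation, and its proof is precisely the matching of the three $\delta$-sums to the top edge ($i=0$), the interior strips ($1\le i\le \ell-1$, weight $2$), and the bottom edge ($i=\ell$), with the origin excluded because $j$ starts at $1$ and $\ell\not\equiv 0 \pmod p$ rules out other lattice points on the left edge. Your only addition is to spell out the bookkeeping for the right edge $j=q^{\ast}\ell-1$ and the reading of ``interior'' as the strips strictly between the top and bottom edges, which the paper leaves implicit.
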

Due to the weight of interior points and symmetry on the boundary, the equivariant $\rho$-invariant $\delta^{\tau}(p;q,\ell)$ is always an odd integer. A similar description could be obtained for the other formula of Theorem A but we will not need this in the current paper.
\begin{remark}
It should be mentioned that Casson and Gordon \cite{casson1986cobordism} have a similar formula for their function $\delta(p;q,\ell)$ which reads 
\[
\delta(p;q,\ell) = -\dfrac{2q^{\ast}\ell^2}{p}+\#\, \mathcal{P}(p;q,\ell)+1.
\]
\end{remark}
\section{Brieskorn Homology Spheres}
Let $p$ and $q$ be positive relatively prime odd integers. The Brieskorn homology sphere $\Sigma = \Sigma(2,p,q)$ is the link of singularity at the origin of the complex polynomial $x^2 + y^p + z^q = 0$, in particular, it is canonically oriented. The involution $\tau(x,y,z) = (-x,y,z)$ makes $\Sigma$ into a double branched cover of $S^3$ with branch set the right-handed $(p,q)$ torus knot. The manifold $\Sigma$ is Seifert fibered over $S^2$ with a set of unnormalized Seifert invariants $(2,b_1)$, $(p,b_2)$, $(q,b_3)$ with even $b_2$ and $b_3$ found from the equation $pqb_1 + 2qb_2 + 2pb_3 = 1$. For a choice of base point fixed by $\tau$, the fundamental group of $\Sigma$ is isomorphic to
\[
\left< x_1,x_2,x_3,h, \vv [x_i,h]=1, x_1^2=h^{-b_1}, x_2^p=h^{-b_2},x_3^q=h^{-b_3}, x_1 x_2 x_3=1\right>
\]
and the induced action $\tau_*: \pi_1 (\Sigma) \to \pi_1 (\Sigma)$ is given by the formula $\tau_{\ast}(g) = x_1^{-1} g\, x_1$, see for instance \cite[Proposition 2.4]{collin1999geometric}. 

Since $\Sigma$ is an integral homology sphere, all reducible $SU(2)$ representations of $\pi_1 (\Sigma)$ are trivial. Let $\alpha: \pi_1(\Sigma) \to SU(2)$ be an irreducible representation then $\alpha(h) = -1$, therefore, $\tau^*\alpha = u\alpha u^{-1}$, where $u = \alpha (x_1)$ is an element of order four. Note for future use that up to conjugation $u = i \in SU(2)$ and $\Ad u = \diag (1,-1,-1) \in SO(3)$. The flat $SU(2)$ bundle $E_{\alpha} \to \Sigma$ is trivial for topological reasons, and $\tau$ admits a lift $\nu: E_{\alpha} \to E_{\alpha}$ which is unique up to gauge equivalence. The adjoint representation $\Ad\alpha: \pi_1(\Sigma) \to SO(3)$ gives rise to a flat bundle $E_{\Ad\alpha} = \Ad (E_{\alpha})$ and to a constant lift of order two called again $\nu$. We wish to calculate the equivariant $\rho$--invariant $\rho_{\Ad\alpha}(\nu,\Sigma)$.

Our calculation will rely on the flat cobordism technique of Fintushel and Stern \cite{FS85}. Let $W$ be the mapping cylinder of the quotient map $\Sigma \to S^2$. It is well known that $W$ is an orbifold with boundary $\Sigma$ and with singularities that are cones over the lens spaces $L(2,1)$, $L(p,b_2)$, and $L(q,b_3)$. The orbifold $W$ can also be viewed as the orbit space of the circle action on $\Sigma\times D^2$ given by the Seifert fibered structure on $\Sigma$ and by the complex multiplication on the 2-disk. We orient $W$ so that the induced orientation on its boundary matches the canonical orientation on $\Sigma$. With this orientation convention, the intersection form on $H_2 (W;\mathbb R) = \mathbb R$ is negative definite.

The involution $\tau$ naturally extends to an involution $\tau: W \to W$. To figure out how $\tau$ acts on the lens spaces, we consider equivariant tubes $D^2 \times S^1 \times D^2$ around the orbits of finite isotropy in $\Sigma \times D^2$. The circle actions on these tubes were described by Fintushel and Stern \cite{FS85}. For the singular orbit of isotropy 2, the circle action is given by $t(z,s,w) = (t^{pq} z,t^2 s, tw)$. The involution $\tau$, which corresponds to the choice of $t = -1$, acts as $\tau (z,s) = (-z,s)$ and hence is of type $g_2$. On the singular orbits of isotropies $p$ and $q$ the circle actions are given by $t(z,s,w) = (t^{2q} z, t^p s, tw)$ and $t(z,s,w) = (t^{2p} z, t^q s, tw)$, respectively. In both cases, specifying $t = -1$ yields the involution $\tau(z,s) = (z, -s)$ of type $g_3$. Therefore, the induced involution is of type $\textbf{A}$ on $L(2,1) = \bR P^3$ and of type $\textbf{B}$ on the lens spaces $L(p,b_2)$ and $L(q,b_3)$. Note that the involution $\tau$ makes $L(2,1)$ into a double branched cover of $S^3$ with branch set a Hopf link.

The manifold $W_0$ obtained from $W$ by chopping off $\tau$-invariant open cones over the lens spaces is an oriented cobordism between the lens spaces and $\Sigma$. The fixed point set $W_0^{\tau}$ of the involution $\tau: W_0 \to W_0$ consists of two connected components. The first component $F_1$ is a cylinder which interpolates between the circle of fixed points in $\Sigma$ and one of the two circles in the fixed point set in $L(2,1)$, while the second component $F_2$ is the orbit 2-sphere with three disks removed so that each of the three resulting boundary circles lies in a respective lens space. 

Let us choose a base point $x_0$ in the fixed point set of the involution $\tau: \Sigma \to \Sigma$ and observe that
\[
\pi_1 W_0 = \left< x_1,x_2,x_3\vv x_1^2=x^p_2=x_3^q=1,x_1x_2x_3=1 \right> = \pi_1\Sigma/\langle h \rangle.
\]
Since $\Ad\alpha: \pi_1 (\Sigma) \to SO(3)$ sends the central element $h$ to identity, the manifold $W_0$ is a flat $SO(3)$ cobordism. The representation $\Ad\alpha: \pi_1 W_0 \to SO(3)$ gives rise to a flat bundle over $W_0$ which is no longer trivial, and the involution $\tau: W_0 \to W_0$ admits a lift $\nu$ to that bundle.
\begin{lemma}
This lift restricts to a trivial lift over each of the lens spaces $L(p,b_2)$ and $L(q,b_3)$. 
\end{lemma}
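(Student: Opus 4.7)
The plan is to realize $\nu$ as a constant lift in the sense of Example 3 (with $SO(3)$ in place of $U(1)$) and to track how the constant-lift data transforms when we pass from $W_0$ to one of its lens space boundaries. In the universal cover picture, $\nu$ corresponds to a pair $(\widetilde\tau, u)$ with $u = \Ad\alpha(x_1) \in SO(3)$, encoding the conjugation $\tau^{\ast}\alpha = \alpha(x_1)\alpha\alpha(x_1)^{-1}$. The key arithmetic input is that the relation $x_1^2 = h^{-b_1}$ in $\pi_1\Sigma$ becomes $x_1^2 = 1$ in $\pi_1(W_0) = \pi_1\Sigma/\langle h \rangle$, so that $u^2 = \Ad\alpha(x_1^2) = 1$.

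To restrict $\nu$ to $L(p,b_2)$, I would pick a fixed point $y_2$ on the core fixed circle of $\tau|_{L(p,b_2)}$ together with a lift $\widetilde y_2$ in a component $S_0^3$ of the preimage of $L(p,b_2)$ in $\widetilde W_0$; each such component is a universal cover of $L(p,b_2)$. Writing $\widetilde\tau(\widetilde y_2) = g \cdot \widetilde y_2$ for some $g \in \pi_1(W_0)$, a direct comparison shows that $(\widetilde\tau, u)$ and $(g^{-1}\widetilde\tau, \Ad\alpha(g^{-1}) u)$ define the same lift on $E_{\Ad\alpha}$, with the second parametrization being the one adapted to $L(p,b_2)$ since the modified $\widetilde\tau$ preserves $S_0^3$ and fixes $\widetilde y_2$. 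Thus the $SO(3)$-parameter of the restricted constant lift equals $\Ad\alpha(g^{-1}) u$, and the lemma reduces to showing $g = x_1$ in $\pi_1(W_0)$.

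To compute $g$ I would use the identity $g = [\tau(\gamma) \gamma^{-1}] \in \pi_1(W_0, x_0)$ for any path $\gamma$ from $x_0 \in \Sigma$ to $y_2$, and choose $\gamma = \gamma_1 \cdot \gamma_2 \cdot \gamma_3$ with $\gamma_1 \subset F_1$ ending at a fixed point $z_0 \in C_1 \subset L(2,1)$, with $\gamma_2 \subset L(2,1)$ joining $z_0$ to a fixed point $z_1$ on the other fixed circle $C_2 \subset L(2,1)$, and with $\gamma_3 \subset F_2$ joining $z_1$ to $y_2$. Because $\tau$ acts as the identity on $F_1 \cup F_2$,
\[
\tau(\gamma)\gamma^{-1} = \gamma_1 \cdot (\tau(\gamma_2) \gamma_2^{-1}) \cdot \gamma_1^{-1}.
\]
The middle loop lies in $L(2,1) = \bR P^3$: since complex conjugation on $S^3$ fixes the real great circle (lifting $C_1$) pointwise but acts antipodally on the imaginary great circle (lifting $C_2$), the lift of $\tau(\gamma_2) \gamma_2^{-1}$ starting at a lift of $z_0$ ends at its antipode, and the loop therefore represents the nontrivial class in $\pi_1 L(2,1) = \bZ/2$. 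Under the inclusion $\pi_1 L(2,1) \hookrightarrow \pi_1(W_0)$ this class is identified with $x_1$, the order-two generator coming from the multiplicity-$2$ Seifert fiber. Hence $g = x_1$ and $\Ad\alpha(g^{-1}) u = \Ad\alpha(x_1)^{-1} \Ad\alpha(x_1) = 1$, which is exactly the assertion that the restricted lift is the canonical one. The argument for $L(q,b_3)$ is verbatim the same, with $\gamma_3$ rerouted through $F_2$ to end on the fixed circle in $L(q,b_3)$.

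The main obstacle is the antipodal-lift calculation in $L(2,1)$ together with the identification of the resulting order-two element with $x_1 \in \pi_1(W_0)$. Once these are pinned down, the conclusion follows from $u^2 = 1$ by direct substitution, and the computation is insensitive to the choice of the auxiliary path $\gamma_2$.
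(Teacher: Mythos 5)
Your proposal is correct and follows essentially the same route as the paper: both arguments choose the connecting path through the fixed components $F_1$ and $F_2$, isolate the arc in $L(2,1)$ joining the two fixed circles, show via the lift to $S^3$ (joining antipodal points) that this arc contributes the order-two generator $x_1$, and conclude that the restricted lift parameter is $\Ad\alpha(x_1)^{-1}\Ad\alpha(x_1)=1$, i.e.\ the trivial lift. The only differences are notational (you phrase the bookkeeping via constant lifts on the universal cover of $W_0$, the paper via conjugating the generator $t_2$ and computing $\gamma u\gamma^{-1}=(\gamma\tau_{\ast}(\gamma)^{-1})x_1=x_1^2=1$), which amount to the same computation.
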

\begin{proof}
Note that all $SO(3)$ bundles over $L(p,b_2)$ and $L(q,b_3)$ are trivial for topological reasons because $p$ and $q$ are odd, therefore, it makes sense to talk about trivial lifts. We will prove the triviality of the lift  for $L(p,b_2)$, the argument for $L(q,b_3)$ is similar. The action on the fundamental group $\tau_{\ast}:\pi_1 (W_0,x_0) \rightarrow \pi_1(W_0,x_0)$ satisfies $\tau_{\ast}(x_2)=x_1 x_2 x_1^{-1}$. Let $y_0$ be a base point in $L(p,b_2)$ on the fixed point circle. We are going to determine the induced action $\tau_{\ast}:\pi_1(L(p,b_2),y_0)\rightarrow \pi_1(L(p,b_2),y_0)$. To do this, choose a generator $t_2$ in $\pi_1(L(p,b_2),y_0)$ and a path $\gamma$ which conjugates the generators $\gamma t_2 \gamma^{-1}=x_2$ from the base points $x_0$ to $y_0$. This path is chosen to follow along the fixed component $F_1$ from $x_0$  to $L(2,1)$ where it lies in one component of the link. From there, follow a straight line to join with other fixed circle component in $L(2,1)$ and finally from $L(2,1)$ to $L(p,b_2)$ along the second fixed component $F_2$. We compute the action as follows:
\begin{align*}
\tau_{\ast}(t_2)&=\tau_{\ast}(\gamma)^{-1}\tau_{\ast}(x_2)\tau_{\ast}(\gamma) \\
&=\tau_{\ast}(\gamma)^{-1}(x_1 x_2 x_1^{-1})\tau_{\ast}(\gamma) \\
&=u t_2 u^{-1}  
\end{align*}
where $u=\tau_{\ast}(\gamma)^{-1} x_1 \gamma$. For any irreducible $SO(3)$ representation $\Ad \alpha$ of $\pi_1(W_0,x_0)$, the restriction of the representation to the lens space $L(p,b_2)$ satisfies\begin{align*}
\tau^{\ast}(i^{\ast}\Ad \alpha)=(\gamma u \gamma^{-1})i^{\ast}(\Ad \alpha)(\gamma u^{-1}\gamma^{-1}).
\end{align*} 
To determine the lift we must conjugate $u$ by $\gamma$ and we obtain $\gamma u \gamma^{-1}=(\gamma \tau_{\ast}(\gamma)^{-1})x_1$, which can be shown to be homotopically trivial.  

To see this, note the restriction of the involution $\tau$ to $L(2,1)$ can be given by $[x_1:x_2:x_3:x_4] \mapsto [x_1:-x_2:x_3,-x_4]$ in homogeneous coordinates. Let $c(t)=[1-t:0:0:t]$ be the path in $L(2,1)$ joining the two fixed points $[1:0:0:0]$ and $[0:0:0:1]$ in the two component link in $L(2,1)$. The image of this path under the involution is $(\tau c)(t)=[1-t:0:0:-t]$ and its inverse is given by $(\tau c)^{-1}(t)=[t:0:0:t-1]$. Now the path 
\begin{align*}
\sigma(t)=
\begin{cases} 
      c(2t)  & 0 \leq t \leq 1/2 \\
      (\tau c)^{-1}(2t-1) & 1/2 \leq t \leq 1 \\
\end{cases}
\end{align*}
is the generator $x_1$ of $\pi_1(L(2,1))$ since its lift to $S^3$ joins two antipodal points $(0,0,0,1)$ and $(0,0,0,-1)$. It follows from this that $(\gamma \tau_{\ast}(\gamma)^{-1})x_1=x_1^2=1$.
\end{proof}
Let $\alpha: \pi_1(\Sigma) \to SU(2)$ be an irreducible representation and apply the $G$-signature theorem of Donnelly \cite{Don78} to the flat cobordism $W_0$ to obtain 
\begin{multline}\notag
\sign_{\Ad \alpha}(\nu, W_0)=\int_{W_0^{\tau}} \ch_{\nu}(V_{\alpha})e(N(W_0^{\tau})) - \eta_{\Ad \alpha}(\nu,\Sigma) \\ + \eta_{\Ad \alpha_1}(\nu,L(2,1)) + \eta_{\Ad \alpha_2}(\tau, L(p,b_2)) + \eta_{\Ad \alpha_3}(\tau,L(q,b_3)),
\end{multline}
where $V_{\alpha} = E_{\Ad\alpha}\,\otimes\,\mathbb C$ is a flat $SU(3)$ bundle and $\Ad\alpha_i$ stand for the induced representations on the fundamental groups of the lens spaces. The fixed point set $W_0^{\tau}$ splits into a disjoint union of surfaces $F_1$ and $F_2$ with $\ch_{\nu} (V_{\alpha}) = -1$ over $F_1$ and $\ch_{\nu}(V_{\alpha}) = \ch_{\tau}(V_{\alpha}) = 3$ over $F_2$. This reduces the above formula to 
\begin{multline}\notag
\sign_{\Ad \alpha} (\nu, W_0) = -\int_{F_1} e(N(F_1)) + 3\int_{F_2} e(N(F_2))  - \eta_{\Ad \alpha}(\nu,\Sigma) \\ + \eta_{\Ad\alpha_1}(\nu,L(2,1)) + \eta_{\Ad\alpha_2}(\tau, L(p,b_2)) + \eta_{\Ad \alpha_3}(\tau,L(q,b_3)).
\end{multline}
Similarly, for the trivial representation $\Ad\theta: \pi_1 (\Sigma) \to SO(3)$ and a constant lift $\nu$ of order two we have
\begin{multline}\notag
\sign_{\Ad \theta} (\nu, W_0) = -\int_{F_1} e(N(F_1)) - \int_{F_2} e(N(F_2)) - \eta_{\Ad \theta}(\nu,\Sigma) \\ + \eta_{\Ad \theta}(\nu,L(2,1)) +\eta_{\Ad \theta}(\nu,L(p,b_2))+\eta_{\Ad \theta}(\nu,L(q,b_3)).
\end{multline}
Taking the difference of the above two formulas we obtain
\begin{multline}\notag
\sign_{\Ad \alpha}(\nu, W_{0})- \sign_{\Ad \theta}(\nu,W_0) = 4 \int_{F_2} e(N(F_2)) \\  - \rho_{\Ad \alpha}(\nu,\Sigma)+\rho_{\Ad \alpha_1}(\nu,L(2,1)) +\eta_{\Ad \alpha_2}(\tau,L(p,b_2)) \\ +\eta_{\Ad \alpha_3}(\tau,L(q,b_3))-\eta_{\Ad \theta}(\nu,L(p,b_2))-\eta_{\Ad \theta}(\nu,L(q,b_3)).
\end{multline}
We showed in Section \ref{S:complex} that $\rho_{\Ad \alpha_1}(\nu,L(2,1)) = 0$. The equivariant $\eta$-inva\-riants in the above formula do not add up to equivariant $\rho$-invariants because of the discrepancy in the lifts. To rectify this problem, note that 
\[
\eta_{\Ad \theta}(\nu,L(p,b_2)) = -\eta(\tau,L(p,b_2)),\quad
\eta_{\Ad \theta}(\tau,L(p,b_2)) = 3\eta(\tau,L(p,b_2)),
\]
where $\eta(\tau,L(p,b_2))$ stands for the equivariant $\eta$-invariant corresponding to the trivial $U(1)$ representation, and that similar formulas hold for $L(q,b_3)$. Therefore, we can write
\begin{multline}\notag
\sign_{\Ad \alpha}(\nu, W_{0}) - \sign_{\Ad \theta}(\nu,W_0) = 4 \int_{F_2} e(N(F_2)) \\ - \rho_{\Ad \alpha}(\nu,\Sigma)+\rho_{\Ad \alpha_2}(\tau,L(p,b_2)) + \rho_{\Ad \alpha_3}(\tau,L(q,b_3)) \\ + 4\eta(\tau,L(p,b_2)) + 4\eta(\tau,L(q,b_3)).
\end{multline}
Since the involution $\tau$ is part of the circle action on $\Sigma$, the twisted signature vanishes and $\sign_{\Ad\theta}(\nu,W_0) = -\sign(W_0) = 1$. The integral term in the above formula is the orbifold self-intersection number of the orbit $2$-sphere, which equals $-1/2pq$. Therefore,
\begin{multline}\notag
\rho_{\Ad \alpha}(\nu,\Sigma) = 1- \dfrac{2}{pq}+\rho_{\Ad \alpha_2}(\tau,L(p,b_2)) + \rho_{\Ad \alpha_3}(\tau, L(q,b_3))\\+4\eta(\tau,L(p,b_2)) + 4\eta(\tau,L(q,b_3)) 
\end{multline}
with
\[
\eta(\tau,L(p,b_2)) = -\dfrac{1}{p}\; \sum_{k=1}^{p-1}\; \cot \left( \dfrac{\pi k}{p} \right)\cot\left( \dfrac{\pi b_2 k}{p} + \dfrac {\pi}{2}\right)\;
\]
and  
\[
\eta(\tau,L(q,b_3)) = -\dfrac{1}{p}\; \sum_{k=1}^{q-1}\; \cot \left( \dfrac{\pi k}{q} \right)\cot\left( \dfrac{\pi b_3 k}{q} + \dfrac {\pi}{2}\right).
\]
\smallskip\noindent

To finish the calculation, express the equivariant $\rho$-invariants in terms of $U(1)$ representations,
\begin{gather}
\rho_{\Ad \alpha_2}(\tau,L(p,b_2)) = \eta_{\alpha_2}(\tau,L(p,b_2)) + \eta_{\,\overline{\alpha}_2}(\tau,L(p,b_2))-2\eta(\tau,L(p,b_2),\notag \\
\rho_{\Ad \alpha_3}(\tau,L(q,b_3)) =  \eta_{\alpha_3}(\tau,L(q,b_3)) + \eta_{\,\overline{\alpha}_3}(\tau,L(q,b_3))-2\eta(\tau,L(q,b_3),\notag
\end{gather}
and use the formulas of Theorem A to arrive at the following result. 
\begin{theorem}
Let $p,q$ be relatively prime odd integers and $\alpha: \pi_1 (\Sigma(2,p,q)) \to SU(2)$ an irreducible representation given by its rotation numbers $\ell_1$, $\ell_2$ and $\ell_3$ as in Fintushel--Stern \cite{FS90}. Let $\tau: \Sigma(2,p,q) \to \Sigma(2,p,q)$ be the involution contained in the natural circle action on $\Sigma(2,p,q)$, and let $\nu$ be the unique lift of $\tau$ to the flat bundle $E_{\Ad\alpha}$. The equivariant $\rho$-invariant of $\Ad\alpha$ with respect to this lift is given by 
\begin{align*}
\rho_{\Ad \alpha}(\nu,\Sigma(2,p,q)) = 1-\dfrac{2}{pq} &-\dfrac{4}{p}\;\sum_{k=1}^{p-1}\cot\left(\dfrac{\pi k}{p}\right)\tan\left(\dfrac{\pi b_2 k}{p}\right)\cos^{2}\left(\dfrac{\pi k l_2}{p}\right) \\&-\dfrac{4}{q}\;\sum_{k=1}^{q-1}\cot\left(\dfrac{\pi k}{q}\right)\tan\left(\dfrac{\pi b_3 k}{q}\right)\cos^{2}\left(\dfrac{\pi k l_3}{q}\right).
\end{align*}  
\end{theorem}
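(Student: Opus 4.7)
The approach is an equivariant flat cobordism calculation in the spirit of Fintushel--Stern. I construct the cobordism $W_0$ obtained from the mapping cylinder of the Seifert quotient $\Sigma\to S^2$ by excising $\tau$-invariant open cones around the three singular orbits, leaving boundary $\Sigma$ together with the link lens spaces $L(2,1)$, $L(p,b_2)$, $L(q,b_3)$. A direct inspection of the $S^1$-actions on the equivariant tubes near each singular orbit identifies the restricted involutions as type $\mathbf{A}$ on $L(2,1)$ and type $\mathbf{B}$ on the other two. Since $\Ad\alpha$ kills the central element $h\in\pi_1(\Sigma)$, the representation descends through $\pi_1(W_0)=\pi_1(\Sigma)/\langle h\rangle$, so $W_0$ is a flat $SO(3)$-cobordism and the involution together with its lift $\nu$ both extend to it.

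The main technical step is a triviality lemma: up to gauge, $\nu$ restricts to the identity-on-fibers lift over each of $L(p,b_2)$ and $L(q,b_3)$. Its proof transports the base point from the fixed circle in $\Sigma$ to a fixed point in the lens space along a path meeting both fixed components $F_1$ and $F_2$ of $W_0^\tau$, and then performs an explicit computation inside $L(2,1)=\mathbb{R}P^3$ exhibiting the correction cocycle $\gamma\tau_\ast(\gamma)^{-1}x_1$ as a word equal to $x_1^2$ in $\pi_1$, hence null-homotopic. This is what guarantees that the boundary contributions will match the canonical-lift formulas of Theorem A.

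Given the lemma, I apply Donnelly's $G$-signature theorem twice, to the pairs $(\Ad\alpha,\nu)$ and $(\Ad\theta,\nu)$, and subtract. Over $F_1$ the equivariant Chern character of the flat bundle is $\Tr(\Ad u)=-1$, and over $F_2$ it is $3$; the difference leaves $4\int_{F_2}e(N(F_2))$, which equals $-2/pq$ via the orbifold self-intersection of the orbit $2$-sphere. Globally, $\sign_{\Ad\alpha}(\nu,W_0)=0$ because $\tau$ sits inside the circle action on $\Sigma$, while $\sign_{\Ad\theta}(\nu,W_0)=-\sign(W_0)=1$. The trace identities $\eta_{\Ad\theta}(\nu,L(p,b_2))=-\eta(\tau,L(p,b_2))$ and $\eta_{\Ad\theta}(\tau,L(p,b_2))=3\eta(\tau,L(p,b_2))$, which come from the different lifts acting on the fibers of the rank-three trivial bundle, recombine the boundary $\eta$-contributions into $\rho_{\Ad\alpha_i}(\tau,L(p_i,b_i))+4\eta(\tau,L(p_i,b_i))$ for $i=2,3$; the $L(2,1)$ contribution vanishes by the argument of Section~\ref{S:complex}.

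To finish, I decompose the restricted adjoint as $\Ad\alpha_i=1\oplus\alpha_{\ell_i}\oplus\bar\alpha_{\ell_i}$ into $U(1)$-summands, substitute the type-$\mathbf{B}$ formula of Theorem A together with the scalar identity $\eta(\tau,L(p,b_2))=\tfrac{1}{p}\sum_{k=1}^{p-1}\cot(\pi k/p)\tan(\pi b_2 k/p)$, and combine the resulting sums via $1-\sin^2=\cos^2$ to collapse them into the single $\cos^2$ form of the theorem, tracking the orientation sign between the Seifert-inherited and standard orientations on the boundary lens spaces. The triviality lemma is the hard step and will be the main obstacle; everything downstream is organized bookkeeping and trigonometry.
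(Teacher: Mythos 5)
Your proposal is correct and follows essentially the same route as the paper: the same flat $SO(3)$ cobordism $W_0$ with the type $A$/type $B$ identification of the boundary involutions, the same triviality lemma for the lift over $L(p,b_2)$ and $L(q,b_3)$ proved by transporting the base point and computing $\gamma\tau_{\ast}(\gamma)^{-1}x_1 = x_1^2$ in $L(2,1)$, the same double application of Donnelly's $G$-signature theorem with $\ch_{\nu} = -1$ on $F_1$ and $3$ on $F_2$, the vanishing of the $L(2,1)$ contribution, the lift-discrepancy identities producing the $4\eta(\tau,\cdot)$ correction terms, and the final reduction to Theorem A via $\Ad\alpha_i = 1\oplus\alpha_{\ell_i}\oplus\bar\alpha_{\ell_i}$.
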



\section{On instanton homology of torus knots}\label{S:floer}
Kronheimer and Mrowka \cite{kronheimer2011khovanov} defined singular instanton Floer homology $I^{\natural}(K)$ for knots $K$ in the 3-sphere and used it to prove that Khovanov homology is an unknot detector. The chain complex $IC^{\natural}(K)$ for computing this Floer homology was described in \cite{kronheimer2011khovanov} in terms of orbifold gauge theory. Poudel and Saveliev \cite{poudel2015link} gave an alternative description of $IC^{\natural}(K)$ in terms of equivariant gauge theory on the double branched cover $Y \to S^3$ with branch set $K$. The generators of their chain complex arise from equivariant representations $\alpha: \pi_1 (Y) \to SO(3)$, and their gradings are given by the equivariant index of the ASD operator $\cD_A\,(\alpha,\theta)$ on the infinite cylinder $\mathbb R \times Y$ twisted by a connection $A$ interpolating between $\alpha$ at minus infinity and $\theta$ at plus infinity. In this section, we will use this description together with the formulas of Theorem B to compute the generators and gradings in the chain complexes $IC^{\natural}(K)$ of all $(p,q)$-torus knots $K$ with odd $p$ and $q$.

Let $K$ be a right-handed $(p,q)$-torus knot with odd relatively prime $p$ and $q$. The double branched cover of $S^3$ with branch set the knot $K$ is the Brieskorn homology sphere $\Sigma(2,p,q)$. Since the covering translation $\tau$ is part of the Seifert fibered structure on $\Sigma(2,p,q)$, it is homotopic to the identity hence all representations $\alpha: \pi_1 (\Sigma(2,p,q)) \to SO(3)$ are equivariant. According to \cite{poudel2015link}, the trivial representation $\theta$ gives rise to a single generator in $IC^{\natural}(\Sigma(2,p,q))$ of grading $\sign K = 0 \pmod 4$, and every irreducible representation $\alpha: \pi_1 (Y) \to SO(3)$ gives rise to four generators, two of grading $\mu(\alpha)\pmod 4$ and two of grading $\mu(\alpha) + 1 \pmod 4$, where
\[
\mu(\alpha) =\dfrac{1}{2}\cdot \gr(\alpha)+\frac 1 4\cdot(1-\rho_{\Ad \alpha}(\nu,\Sigma(2,p,q))
\]
and $\gr(\alpha)$ is the grading of $\alpha$ as a generator in the instanton Floer chain complex of the homology sphere $\Sigma(2,p,q)$, see \cite[Section 6.2]{poudel2015link}. 

An explicit formula for the gradings $\gr(\alpha)$ can be found in Fintushel--Stern \cite{FS90}. Let $\alpha$ be given by its rotation numbers $\ell_1=1$, $\ell_2$ and $\ell_3$, and define $e=pq\ell_1+2q\ell_2+2p\ell_3$. Then 
\begin{multline}
\gr(\alpha) = \dfrac{e^2}{pq}+\dfrac{2}{p}\, \sum_{k=1}^{p-1}\cot\left(\dfrac{\pi k}{p}\right)\cot\left(\dfrac{\pi b_2 k}{p}\right)\sin^{2}\left(\dfrac{\pi k \ell_2}{p}\right) \\ +\dfrac{2}{q}\, \sum_{k=1}^{q-1}\cot\left(\dfrac{\pi k}{q}\right)\cot\left(\dfrac{\pi b_3 k}{q}\right)\sin^{2}\left(\dfrac{\pi k \ell_3}{q}\right).
\end{multline}
Combining this formula with the formula of Theorem B for the equivariant $\rho$-invariant we obtain
\begin{align*}
\mu(\alpha)=\dfrac{e^2+1}{2pq}\,+& \,\dfrac{1}{2}\left( \delta(p;b_2,\ell_2)+\delta(q;b_3,\ell_3)\right)\\+&\dfrac{1}{2}\left( \delta^{\tau}(p;b_2,\ell_2)+\delta^{\tau}(q,b_3,\ell_3) \right)-\dfrac{1}{2} \left( D(p;b_2)+D(q;b_3) \right),
\end{align*}
where 
\begin{align*}
D(p;b)=\dfrac{2}{p}\sum_{k=1}^{p-1}\cot\left( \dfrac{\pi k}{p} \right)\cot \left( \dfrac{\pi b k}{p}+\dfrac{\pi}{2} \right).
\end{align*}
\begin{remark}
Lemma 4.1 of Collin--Saveliev \cite{collin2001equivariant} shows that $\mu(\alpha) = \gr(\alpha)\pmod 2$ for all irreducible $\alpha$
\end{remark}
The terms $D(p;b)$ in the above formula are a generalization of Dedekind sums, which was studied by Dieter \cite{dieter1984cotangent}
\begin{align*}
c(q,p\vv x,z)=\dfrac{1}{p}\sum_{k=1}^{p-1} \cot\left( \dfrac{\pi q (k+z)}{p}-x\pi \right) \cot\left(\dfrac{\pi(k+z)}{p}\right)
\end{align*}
where $0 \leq x,z<1$ are real numbers, and $p,q$ are relatively prime positive integers. When $x=1/2$ and $z=0$ we obtain
\begin{align*}
c(q,p \vv 1/2, 0)&=\dfrac{1}{p} \sum_{k=1}^{p-1} \cot\left( \dfrac{\pi q k}{p}-\dfrac{\pi}{2} \right) \cot\left(\dfrac{\pi k}{p}\right)\\
&=\dfrac{1}{p} \sum_{k=1}^{p-1} \cot\left( \dfrac{\pi q k}{p}+\dfrac{\pi}{2} \right) \cot\left(\dfrac{\pi k}{p}\right)\\
&=\dfrac{1}{2}D(p;q).
\end{align*}
\smallskip
\begin{example}
Consider the $(3,7)$-torus knot $T_{3,7}\subset S^3$. Its double branched cover is the Brieskorn integral homology sphere $\Sigma(2,3,7)$ which has $2$ irreducible $SO(3)$ representations whose rotation numbers are $\alpha_1=(1,2,2)$ and $\alpha_2=(1,2,4)$ with instanton grading $gr(\alpha_1)\equiv 7 \pmod 8$ and $gr(\alpha_2)\equiv 3 \pmod 8$. Hence the instanton Floer homology groups of $\Sigma(2,3,7)$ are given by 
\begin{align*}
I_{\ast}(\Sigma(2,3,7))=(0,0,0,\bZ,0,0,0,\bZ).
\end{align*}  
Using the formula above, we can compute the gradings $\mu(\alpha_1)=87 \equiv 3 \pmod 4$ and $\mu(\alpha_2)=125\equiv 1 \pmod 4$. Taking into account the special generator of grading zero, we obtain 
\begin{align*}
IC^{\natural}(T_{3,7})&=(0,\bZ^2,\bZ^2,0)\oplus (\bZ^2,0,0,\bZ^2)\oplus (\bZ,0,0,0)
=(\bZ^3,\bZ^2,\bZ^2,\bZ^2).
\end{align*}
\end{example}
\begin{example}
We can include the previous example in an infinite family of torus knots $T_{3,6n+1}\subset S^3$. The double branched cover is the Brieskorn homology sphere $\Sigma(2,3,6n+1)$ with Seifert invariants $\Sigma((2,-3),(3,2),(6n+1,5n+1))$ if $n$ is odd and $\Sigma((2,-1),(3,2),(6n+1,-n))$ for $n$ even. There are $2n$ irreducible $SO(3)$ representations, up to conjugacy, each with rotation numbers of the form $(1,2,\ell_3)$ with $\ell_3$ an even integer on the interval $[n+1,5n]$. The instanton Floer homology is known to be given by (see \cite{FS90})
\begin{align*}
I_{\ast}(\Sigma(2,3,6n+1))= \begin{cases} (0,\bZ^{n/2},0,\bZ^{n/2},0,\bZ^{n/2},0,\bZ^{n/2}) & \text{if $n$ is even} \\ (0,\bZ^{\frac{n-1}{2}},0,\bZ^{\frac{n+1}{2}},0,\bZ^{\frac{n-1}{2}},0,\bZ^{\frac{n+1}{2}}) & \text{if $n$ is odd}.  \end{cases}
\end{align*}
The formula for the gradings in knot Floer homology is 
\begin{align*}
\mu(\alpha)=\dfrac{e^2+1}{36n+6}&+\dfrac{1}{2}\left(\delta(3;2,2)+\delta(q;b,\ell_3)\right) \\
 &+\dfrac{1}{2}\left(\delta^{\tau}(3;2,2)+\delta^{\tau}(q;b,\ell_3) \right)-\dfrac{1}{2}\left(D(3;2)+D(q;b)\right)
\end{align*}
where $q=6n+1$, $b=5n+1$ and $e=7(6n+1)+6\ell_3$. The equalities 
\begin{align*}
\delta(3;2,2)=- \dfrac{1}{3}\, , \quad  \delta^{\tau}(3;2,2)=1 \, , \quad  D(3;2)=\dfrac{4}{3}
\end{align*}
are readily verified. The other terms require more work and we use a result of \cite{dieter1984cotangent} to evaluate the generalized Dedekind sum
\begin{align*}
D(6n+1;5n+1)=2 \, c(5n+1;6n+1 \vv 1/2,0).
\end{align*}
For $n > 1$ define a sequence of integers
\begin{align*}
a_{i+1}=a_{i-1}-a_iq_i
\end{align*}
with $a_0=6n+1$, $a_1=5n+1$ and $q_i=\left[\dfrac{a_{i-1}}{a_i}\right]$. Similarly, set $s_{-1}=0$, $s_0=1$, $x_0=0$, $x_1=1/2$ and 
\begin{align*}
s_i=s_{i-1}q_i+s_{i-2} \\
x_{i+1}=x_{i-1}-x_iq_i.
\end{align*} 
This defines sequences $[a_0,a_1,a_2,a_3,a_4]=[6n+1,5n+1,n,1,0]$,$[q_1,q_2,q_3]=[1,5,n]$, $[s_1,s_2,s_3]=[1,6,6n+1]$, $[x_0,x_1,x_2,x_3,x_4]=[0,1/2,-1/2,3,-1/2-3n]$, which according to \cite[Theorem 3.2]{dieter1984cotangent} evaluates  
\begin{align*}
D(6n+1;5n+1)&=\dfrac{2}{6n+1}\sum_{k=1}^{6n} \cot\left(\dfrac{\pi k}{6n+1}\right)\cot\left(\dfrac{\pi k (5n+1)}{6n+1}+\dfrac{\pi}{2}\right)\\
&= 2\left( n-1+\dfrac{5n+1}{6n+1} \right).
\end{align*}
Using these values we obtain 
\begin{align*}
\mu(\alpha)=\dfrac{2(144n^2+48n+42n\ell_3+4+7\ell_3+3\ell_3^2)}{6n+1}+\dfrac{1}{2}\left( \delta^{\tau}(q;b,\ell_3)+\delta(q;b,\ell_3) \right).
\end{align*}
This further simplifies by using the formulas 
\begin{align*}
\delta(q;b,\ell_3)&=\dfrac{-2b^{\ast}\ell_3^2}{q}+\#P(q;b,\ell_3)+1 \quad \text{and} \\
\delta^{\tau}(q;b,\ell_3)&=\#P^s(q;b,\ell_3)+1 \, ,
\end{align*}
where $q=6n+1$, $b=5n+1$ and $b^{\ast}=6$ the multiplicative inverse of $b$ mod $q$. The formula for the grading reduces to 
\begin{align*}
\mu(\alpha&)=48n+14\ell_3+9+\dfrac{1}{2}\left( \#P^s(q;b)+\#P(q;b)\right)\\
&\equiv 1+ \dfrac{1}{2}\left( \#P^s(q;b)\, + \, \#P(q;b)\right) \pmod 4
\end{align*}
which amounts to the combinatorial problem of counting lattice points in a parallelogram with appropriate weight and signs. To find a closed form formula, we take an alternative approach by using the identity
\begin{align*}
\delta^{\tau}(q;b,\ell_3)+\delta(q;b,\ell_3)=2\, \delta(q;2b,\ell_3)
\end{align*}
and the formula of Lawson \cite[p. 188]{Lawson87},
\begin{align*}
\mu(\alpha)&=\dfrac{2(144n^2+48n+42n\ell_3+4+7\ell_3+3\ell_3^2)}{6n+1}+\delta(q;2b,\ell_3)\\
&=48n+14\ell_3+8+N(q;2b,\ell_3)\\
&\equiv N(q;2b,\ell_3) \pmod 4\\
&\equiv -1-2\sum_{k=1}^{\ell_3-1} \left[ q;2b,k+1 \right] \pmod 4
\end{align*}
where $[x;y,z]$ is defined to be $1$ if one of $y,2y,...,(y^{\ast}-1)y \equiv z \pmod x$ and zero otherwise.  Since $2b\equiv 4n+1$ and $(2b)^{\ast} \equiv 3 \mod (6n+1)$, we obtain
\begin{align*}
\mu(\alpha) & \equiv -1-2\sum_{k=1}^{\ell_3-1} \left[6n+1;4n+1,k+1\right] \\
&= \begin{cases}  -1   \, , \quad \quad 2\leq \ell_3 \leq 2n \\
-3  \, , \quad \quad 2n+2 \leq \ell_3 \leq 4n\\
-5 \, , \quad \quad 4n+2 \leq \ell_3 \leq 5n.
\end{cases}
\end{align*}
If $n$ is odd we have 
\begin{align*}
\mu(\alpha)\equiv \begin{cases} 3 \, , \quad  \quad \ell_3=n+1,..2n \quad \text{and} \quad 4n+2,...,5n-1 \\
1 \, , \quad \quad \ell_3=2n+2,...,4n
\end{cases}
\end{align*}
and when $n$ is even,
\begin{align*}
\mu(\alpha)\equiv \begin{cases} 3 \, , \quad  \quad \ell_3=n+2,..2n \quad \text{and} \quad 4n+2,...,5n \\
1 \, , \quad \quad \ell_3=2n+2,...,4n.
\end{cases}
\end{align*}
In either case, half the representations have grading $1$ and the other half grading $3$. Taking the special generator into account in grading zero, we obtain 
\begin{align*}
IC^{\natural}(T_{3,6n+1})&=(\bZ^{2n+1},\bZ^{2n},\bZ^{2n},\bZ^{2n}).
\end{align*}
\end{example} All calculations agree with those of Hedden--Herald--Kirk \cite{hedden2014pillowcase} and support the conjecture about $IC^{\natural}(K)$ for torus knots $K$ stated in \cite[7.4 p. 48]{poudel2015link}.

\bibliographystyle{alpha}
\bibliography{Bibliography}
\end{document}